\newtheorem{remark}{Remark}[section]
\newtheorem{definition}{Definition}[section]
\newtheorem{theorem}{Theorem}
\newtheorem{proposition}{Proposition}[section]
\newtheorem{lemma}{Lemma}[section]
\numberwithin{equation}{section}
\numberwithin{figure}{section}
\renewcommand\subsection{\@startsection{subsection}{2}%
  \z@{-0.8\linespacing\@plus-0.7\linespacing}{0.7\linespacing}%
  {\normalfont\bfseries}}
\renewcommand\subsubsection{\@startsection{subsubsection}{3}%
  \z@{-0.8\linespacing\@plus-0.7\linespacing}{0.7\linespacing}%
  {\normalfont\itshape}}
\begin{document}

\title{Nonlinear discrete Schr\"odinger equations with a point defect}

 \author{Dirk Hennig}
 \email[Email: ]{dirkhennig@uth.gr}
 \address{Department of Mathematics, University of Thessaly, Lamia GR35100, Greece}


\begin{abstract}
\noindent 
We study the $d$-dimensional  discrete nonlinear Schr\"odinger equation with
general power nonlinearity and a delta potential. Our interest lies
in the interplay between two localization mechanisms. On the one hand, the attractive (repulsive) delta potential
acting as a point defect  breaks the translational invariance of the lattice so that a linear staggering (non-staggering) bound state  is formed with negative (positive) energy. On the other hand, focusing nonlinearity may lead to  self-trapping of excitation energy. For focusing nonlinearity we prove the existence of spatially exponentially localized and time-periodic
ground states  and investigate the impact of an attractive respectively repulsive
delta potential on the existence of an excitation threshold, i.e. supercritical $l^2$ norm, for the creation of such  ground states. Explicit expressions for the excitation thresholds are given.
Reciprocally, we discuss the influence of defocusing nonlinearity on the durability of 
the linear bound states and provide upper thresholds of the $l^2$ norm for their preservation. Regarding the asymptotic behavior of the solutions we establish
that for a  $l^2$ norm below the excitation threshold the solutions scatter to a solution of the
linear problem in $l^p$, $2<p\le \infty$.
\end{abstract}

\maketitle

\noindent 
\section{Introduction}

We consider the discrete  nonlinear Schr\"odinger equation (DNLS) on 
the $d$-dimensional infinite  lattice with a delta potential 
\begin{equation}
i\frac{d u_n}{dt}+\kappa (\Delta u)_n+\gamma|u_n|^{2\sigma}u_n+V_0 \delta_{n,0}u_n=0,\,\,\,n \in \mathbb{Z}^d,\label{eq:system1}
\end{equation}
where $u_n\in \mathbb{C}$, $V_0 \in \mathbb{R}$ and $\delta_{m,n}$ denotes the Kronecker delta, i.e.  $\delta_{m,n}=1$ if $m=n$ and $\delta_{m,n}=0$ if $m\neq n$.  That is, the last term on the left-hand side of (\ref{eq:system1}) represents a point defect on the lattice $\mathbb{Z}^d$.
We will refer to (\ref{eq:system1}) also as $\delta$DNLS. The operator $(\Delta u)_n$ is the $d$-dimensional discrete Laplacian
\begin{eqnarray*}
(\Delta u)_{n}=\sum_{j} (u_{n+j}-2 u_n+u_{n-j}),
\end{eqnarray*}
where $j$ are the $d$ unit vectors belonging to the $d$ axes of $\mathbb{Z}^d$ 
and the value of $\kappa>0$ regulates the coupling strength.
The degree  of nonlinearity is determined by $\sigma >0$. For $\gamma >0$ ($\gamma <0$) the nonlinearity is of focusing (defocusing) type and for $V_0>0$ ($V_0<0$) the delta potential is attractive (repulsive).
Without loss of generality we set $\kappa=1$.
In fact, by a scaling of time $\kappa t\mapsto \tilde{t}$ and the parameters
$\gamma/\kappa\mapsto \tilde{\gamma}$ and $V_0/\kappa\mapsto \tilde{V}_0$,  system becomes independent of $\kappa$. 
Note that by a
 transformation $\tilde{u}=\gamma^{1/(2\sigma)}u$  the nonlinearity parameter $\gamma$ could be absorbed into the amplitude. However, as in what follows we differentiate between focusing and defocusing nonlinearity,  we prefer to keep $\gamma$ in system (\ref{eq:system1}).

\begin{remark}
 The presence of the point defect breaks the (space) translational invariance. The system (\ref{eq:system1})  still exhibits  gauge invariance (i.e. multiplication by a complex phase),  and possesses the  time reversibility symmetry $t \leftrightarrow -t$, $u \leftrightarrow \overline{u}$ as well as time shift symmetry.
\end{remark}

$\overline{u}$ denotes the complex conjugate of $u$.

System (\ref{eq:system1}) has two conserved quantities, namely  the Hamiltonian (energy) 
\begin{equation}
 H(u)=\sum_{n\in {\mathbb{Z}^d}}\sum_{j}|u_{n+j}-u_{n}|^2-\frac{\gamma}{\sigma+1}\sum_{n\in {\mathbb{Z}^d}}|u_n|^{2(\sigma+1)}-V_0\sum_{n \in \mathbb{Z}^d} \delta_{n,0}|u_n|^{2}\label{eq:H}
\end{equation}
and the mass
\begin{equation}
 \nu=\sum_{n\in {\mathbb{Z}^d}}|u_n|^2\,.\nonumber
\end{equation}
The equations of motion (\ref{eq:system1}) are derived as 
\begin{equation}
 i\dot{u}_n=\frac{\partial H}{\partial \overline{u}_n}.\nonumber
\end{equation}

The DNLS with general power nonlinearity but without the delta potential, 
\begin{equation}
i\frac{d u_n}{dt}+(\Delta u)_n+|u_n|^{2\sigma}u_n=0,\,\,\,n \in \mathbb{Z}^d,\label{eq:DNLS0}
\end{equation}
has served as a model with numerous applications including optical lattices in Bose-Einstein condensates \cite{optics}, photonic crystals \cite{crystals}, arrays of optical fibers \cite{fibers}, plasmonic nanowires \cite{nanowires} and energy transport in biomolecules. For  reviews see \cite{Kevrekidis}-\cite{DNLS}. We emphasize that the DNLS (\ref{eq:DNLS0})
admits global solutions for all   $\sigma>0$ \cite{Pankov},\cite{Wang}.

We express (\ref{eq:system1}) in  operator form 
 \begin{equation}
  \frac{du}{dt}=i\left(\Delta_\delta u+F(u) \right),\label{eq:operator}
 \end{equation}
 where $\Delta_\delta=\Delta +V_\delta$ with $V_\delta:\,l^2(\mathbb{Z}^d) \mapsto l^2(\mathbb{Z}^d)$, $(V_\delta u)_n=V_0\delta_{n,0}u_n$ and $F(u)=\gamma |u|^{2\sigma}u$.
 The linear operator $\Delta_\delta$ is self-adjoint on $l^2(\mathbb{Z}^d)$ and $i\Delta_\delta \,:l^2(\mathbb{Z}^d) \mapsto l^2(\mathbb{Z}^d)$ is $\mathbb{C}-$linear and skew-adjoint and generates a group $(\mathcal{S}(t))_{t\in \mathbb{R}}$ of isometries on $l^2(\mathbb{Z}^d)$. ( Note that the delta potential acts as a (local) multiplication operator.) For fixed $T>0$ and initial data $u_0 \in l^2(\mathbb{Z}^d)$, a function $u \in C([0,T],l^2(\mathbb{Z}^d))$ is a solution of    (\ref{eq:operator}) if and only if
 \begin{equation}
  u(t)=\mathcal{S}(t)u_0+i\int_0^t \mathcal{S}(t-\tau)F(u(\tau))d\tau.\nonumber
 \end{equation}
In a similar vein as in  \cite{Pankov},\cite{Wang}, exploiting the conservation of energy and mass one proves, by the energy methods, that for any initial data $u_0\in l^2(\mathbb{Z}^d)$ there exists a unique global solution $u\in C^1(\mathbb{R},l^2(\mathbb{Z}^d))$ for any $\sigma >0$. In the forthcoming we write $l^2$ instead of $l^2(\mathbb{Z}^d)$.

It is illustrative to compare the global well-posedness of (\ref{eq:system1}) with data $u_0\in l^2$ for all powers of the nonlinear term $\sigma >0$
with that  of its continuous focusing counterpart
given by
\begin{equation}
 i\frac{\partial U}{\partial t}+\frac{\partial^2 U}{\partial x^2}+|U|^{2\sigma}U+V_0\delta U=0,\,\,\,U_0\in H^1(\mathbb{R}),\label{eq:NLSdelta}
\end{equation}
where $0\le \sigma < \infty$ and $\delta$ is the Dirac measure at the origin.
System (\ref{eq:NLSdelta}) has attracted considerable interest \cite{Caudrelier}-\cite{Ardila} from a physical as well as mathematical view point.  

Several studies have been conducted about the well-posedness of the
Cauchy problem, the existence of standing waves,  their orbital and strong stability, and their asymptotic behavior such as scattering, global existence and blow-up
(see for example \cite{LeCoz}-\cite{Banica}). In more detail, for $d=1$ global well-posedness of the Cauchy problem for (\ref{eq:NLSdelta}) with data in $H^1(\mathbb{R})$   was  proven for $0< \sigma <2$ in \cite{Fukuizumi1}.  
The ground state solution (minimizing  the associated action functional on $H^1(\mathbb{R})$) plays a crucial role as it provides a threshold for the scattering blow-up dichotomy result. For actions below and on that of the ground state solution the solutions are global and scatter in both time directions for negative virial functional.  The solutions blow up in both time directions for positive virial functional. 

With respect to nonlinear bound (solitary) states of (\ref{eq:NLSdelta}), $U(x,t)=\Phi(x)\exp(i \omega t)$,  from the comprehensive study in \cite{Fukuizumi1} we know that for $V_0>0$ the ground state is orbitally stable in $H^1(\mathbb{R})$ for any $\omega >V_0^2/4$ if $0<\sigma \le 2$. For $\sigma>2$  
there exists a critical frequency $\omega_c>V_0^2/4$ such that the ground state is stable  in $H^1(\mathbb{R})$ for any $V_0^2/4<\omega<\omega_c$ and unstable in $H^1(\mathbb{R})$ for any $\omega >\omega_c$. 

For $V_0<0$ and $\omega >V_0^2/4$ orbital stability in $H^1(\mathbb{R})$ of standing solitary waves $\Phi(x)\exp(i \omega t)$ was proven in \cite{Fukuizumi}, if $0<\sigma \le 1$
while their instability occurs
for any $\omega >V_0^2/4$ if  $\sigma \ge 2$.

Finally, let us comment
also on the behavior of the solutions to the NLS in absence of the delta potential, i.e
\begin{equation}
 i\frac{\partial U}{\partial t}+\frac{\partial^2 U}{\partial x^2}+|U|^{2\sigma}U=0,\label{eq:NLS0}
\end{equation}
for which the global existence of solutions for initial data in $H^1$ holds only for $\sigma <2$ while for $\sigma \ge 2$ blow up in finite time may occur (see e.g. in \cite{CazenaveLions}-\cite{Cazenavestab}).
  Regarding the orbital stability of standing solitary wave solutions $\Phi(x)\exp(i \omega t)$ of (\ref{eq:NLS0}), there are several studies \cite{CazenaveLions},\cite{Cazenave1},\cite{NLS01}-\cite{NLS03}. In \cite{CazenaveLions} it is shown that $\Phi(x)\exp(i \omega t)$ is stable in $H^1(\mathbb{R})$ for any $\omega>0$ if $1<\sigma <5$, and unstable in $H^1(\mathbb{R})$ for any $\omega>0$ if $\sigma \ge 5$.
In contrast to its continuum counterpart (\ref{eq:NLS0}), which is completely integrable for cubic nonlinearity, i.e. $\sigma=1$, \cite{Zakharov}-\cite{Newell}, the cubic DNLS is nonintegrable \cite{Ablowitz1},\cite{Levi}.

Similar to the dichotomy feature of the NLS with delta potential,  for the $L^2-$critical case $\sigma=2$, the solutions exist globally in $H^1(\mathbb{R})$ for subcritical mass $||U||_{L^2}<||\Phi||_{L^2}$ where $\Phi$ is the (radial) ground state and scatter to the linear solution $\exp(i \partial_x^2) U_0^{\pm}$ in both time directions \cite{Dodson}. For the critical mass $||U||_{L^2}=||\Phi||_{L^2}$, finite blow-up solutions occur \cite{Merle}. For super-critical mass $||U||_{L^2}>||\Phi||_{L^2}$,  we refer for blow-up results to \cite{Merle1}-\cite{Raphael}.
  
Concerning bound states of the lattice system (\ref{eq:system1}), we distinguish between linear and nonlinear bound ones, respectively. 
The former ones arise in the presence of a delta potential acting as an (external) impurity (defect) breaking the translational invariance of the lattice.  The origin of the latter is due to the focusing nonlinear term creating an intrinsic potential well on a part of the lattice in which excitation energy becomes trapped.
In this paper we study also the interplay between these two localization mechanisms. 

Our contributions in this paper are threefold.

First for focusing nonlinearity we study the existence of  nonlinear ground state  solutions corresponding to   stationary solutions $u_n(t)=x_n \exp(-i \omega t)$ with $\omega\in \mathbb{R}$.

Here we treat real-valued $x_n$. The stationary system
\begin{equation}
 \omega x_n+(\Delta x)_n+\gamma |x_n|^{2\sigma} x_n+V_0 \delta_{n,0} x_n=0,\nonumber
\end{equation}
possesses the conserved quantity $J=2Im(\overline{x}_nx_{n+1})$ \cite{PhysicsReport}. The decay property $x_n\rightarrow 0$ as $|n|\rightarrow \infty$ dictate $J=0$, from which in turn follows (excluding $x_n=0$)
\begin{equation}
 \frac{x_{n+1}}{x_n}=\frac{\overline{x}_{n+1}}{\overline{x}_n}.\nonumber
\end{equation}
With the presentation $x_n=r_n\exp(i\theta_n)$ one has
\begin{equation}
\exp(i\theta_{n+1})=\exp(i\theta_n).
\end{equation}
Hence, $x_{n+1}$ and $x_n$ have the same argument (modulo $\pi$) so that without loss of generality we may take $x_n$ real-valued for all $n\in \mathbb{Z}$.

These ground state solutions  are time-periodic and spatially exponentially localized (see below Proposition \ref{proposition:expdecay}); and are also referred to as {\it  breathers}.
The corresponding system of stationary equations reads
\begin{equation}
 \omega x_n+(\Delta x)_n+\gamma x_n^{2\sigma} x_n+V_0 \delta_{n,0} x_n=0,\,\,\,x_n\in \mathbb{R},\,\,\, n \in \mathbb{Z}^d,\label{eq:stat1}
\end{equation}
and can be derived from  the associated action functional $J\,:\,l^2(\mathbb{Z}^d)
 \mapsto \mathbb{R}$ given by
\begin{eqnarray}
 J(x)&=&
\frac{1}{2}\left\| \nabla x\right\|_{l^2}^2
-\frac{\omega}{2}\left\| x\right\|_{l^2}^2-\frac{\gamma}{2\sigma+2} \left\|x\right\|_{l^{2\sigma+2}}^{2\sigma+2}-\frac{V_0}{2}x_0^{2},\label{eq:J}
\end{eqnarray}
where $(\nabla x)_n=x_{n+j}-x_n$.

\begin{remark}
The frequency $\omega$ of a localized state $u_n(t)=x_n\exp(i\omega t)$
lies above respectively below the upper respectively lower edge of the absolutely continuous spectrum of $\Delta$. The latter is given by
$\sigma_{ac}=[0,4d]$. For $\omega<0$ the associated non-staggering localized state has lower value of action than its staggering counterpart with $\omega>4d$ (see e.g. \cite{PhysicsReport}).
\end{remark}

In this study we consider $\omega<0$ and $\omega>0$ will be treated elsewhere.

\begin{definition}\label{definition:M1}Minimization problem ({\bf{M1}}):
Let $\gamma>0$.
For given $\omega<0$ by an action ground state solution we understand $x\neq 0$ which minimizes the value of $J$ in the Nehari manifold   $\mathcal{N}=\{x\in l^2\setminus\{0\}\,:\,<J^\prime(x),x>=0 \}$.
\end{definition}

In particular, for $\gamma>0$ we are interested in the effect of a delta potential on the existence
of ground state solutions, both in the attractive and the repulsive case. To prove
the existence of such solutions, we use a special variational method, consisting in
minimizing the action functional under the Nehari manifold.  In the first step we prove in Proposition \ref{subsection:Nehari} that the Nehari manifold $\mathcal{N}$  is  nonempty.
The second step,  proves that a minimizer exists on $\mathcal{N}$.
Our main result is:
\begin{theorem}\label{theorem:solution}
Let $\gamma>0$.
 If $V_0<-\omega$ there exists a ground state with frequency $\omega$  in the sense of Definition \ref{definition:M1}.
\end{theorem}
In
the last step, it is verified that such solutions are critical points of the associated
action functional. 

\vspace*{0.5cm}

In \cite{thresh4} the existence of breathers of the DNLS with linear impurities was discussed.
The authors proved by variational methods the existence of ground states for {\it finite} lattices supplemented with Dirichlet boundary conditions. Due to the finite dimension of the associated functional space a straightforward application of the mountain pass theorem is possible.  In contrast, in the current paper the existence proof concerns {\it infinite} lattices for which, assuring that the  associated energy (action) functional exhibits the mountain pass geometry, would require further advanced techniques. Instead we use here a different variational approach, namely the Nehari manifold method. Nevertheless, our existence proofs are valid for finite lattices too.

\vspace*{0.5cm}

Our second result  concerns the existence of excitation thresholds for the creation of ground states with prescribed mass $\nu$. Such solutions are physically relevant because the mass is conserved throughout time.
Being interested in normalized solutions, that is solutions with  prescribed $l^2$ norm,  we consider a  solution $u_n(t)=x_n\exp(-i\omega t)$ to the stationary system (\ref{eq:stat1}) as  the pair $(x_\nu,\omega_\nu)\in l^2 \times \mathbb{R}$, with $\omega_\nu$ being the Lagrange multiplier to the critical point $x_\nu$ on $S_\nu$.
One looks for solutions with minimal energy constrained to the sphere $S_\nu$
\begin{equation}
 S_\nu=\left\{x\in l^2\,:\,||x||_{l^2}^2=\nu \right\}.
\end{equation}

\begin{definition}Minimization problem ({\bf{M2}}):
Let $\gamma>0$. A ground state with (prescribed) $\nu>0$ is defined as a minimizer of the energy functional
$E:\,l^2\mapsto \mathbb{R}$
\begin{equation}
E(x)=\frac{1}{2}\sum_{n \in \mathbb{Z}^d}\sum_j (x_{n+j}-x_{n})^2-\frac{\gamma}{2\sigma+2} \sum_{n \in \mathbb{Z}^d} x_n^{2\sigma+2}-\frac{1}{2}V_0\sum_{n \in \mathbb{Z}^d} \delta_{n,0}x_n^{2},\label{eq:energy}
 \end{equation}
constrained to $S_\nu$.
\end{definition}

The minimization problem {\bf M2} can be solved by an application of  concentration compactness methods \cite{Lions}, adapted to the discrete case \cite{Weinstein}. However, with Proposition \ref{proposition:equ} we establish equivalence between an action ({\bf M1}) respectively energy minimizing ({\bf M2}) ground state.
Hence, the proof of existence of ground state solutions associated
with minimization problem {\bf M1} proves at the same time the existence of ground state solutions associated with minimization problem {\bf M2}.

Crucially,  due to the $U(1)$ symmetry of the Hamiltonian $H(u)$ in (\ref{eq:H}), the total energy $H$ does not depend on the frequency $\omega$ of stationary solutions $u_n(t)=x_n\exp(-i\omega t)$ and $H(u)=2E(x)$, with $E$ given in (\ref{eq:energy}), and $||u||_{l^2}=||x||_{l^2}$. Therefore,
\begin{equation}
 \inf\left\{H(u)\,:\,||u||_{l^2}^2=\nu\right\}=\underline{H}_{\nu}=2\inf_{x\in S_\nu} E(x)=2\underline{E}_{\nu}.\label{eq:varprob}
 \end{equation}
Therefore, we may use  Weinstein's criterion
([\cite{Weinstein}, definition pg. 678]) stating  that if a ground state (as a minimizer of the constrained variational problem (\ref{eq:varprob}) exists for any value of $\nu>0$, there is no excitation threshold. Conversely, if there exists a positive $\nu_{exc}$ such that $\underline{H}_\nu<0$ if and only if $\nu>\nu_{exc}$, then $\nu_{exc}$ constitutes an excitation threshold.
In this context, it was proved in \cite{Weinstein} that,
if $\sigma\ge 2/d$ the DNLS with general power nonlinearity
\begin{equation}
i\frac{d u_n}{dt}+(\Delta u)_n+\gamma|u_n|^{2\sigma}u_n=0,\,\,\,n \in \mathbb{Z}^d,\,\,\,\gamma>0,\label{eq:system2}
\end{equation}
exhibits ground state  solutions if and  only if the total mass $\nu$ is larger than some strictly positive threshold value $\nu_{exc}$.

\begin{theorem}\label{theorem:threshold}
Consider equation (\ref{eq:system1}) with  general power nonlinearity $\sigma>0$ and $\gamma>0$.
 
 \noindent 1) For  attractive delta potential $V_0>0$ and all $\sigma>0$  there is no excitation threshold, i.e. $\underline{E}_\nu<0$ for all $\nu>0$.

 \noindent 2) For repulsive delta potential $V_0<0$ and $\sigma<\min\{1,2/d\}$ there is no excitation threshold, i.e. $\underline{E}_\nu<0$ for all $\nu>0$.
For $\sigma\ge \max\{1,2/d\}$ a ground state solution exists if and only if
 \begin{equation}
 \nu>\nu_{exc}=\left(\frac{(\sigma+1)(2-V_0)}{\gamma}\right)^{1/\sigma}>0.
\end{equation}
 \end{theorem}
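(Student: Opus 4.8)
The plan is to reduce every item of the statement to the construction of one well-chosen comparison sequence on the constraint sphere $S_\nu$. Indeed, by the remark following problem ({\bf M2}) a ground state with $E(x)=\underline{E}_\nu$ exists as soon as $\underline{E}_\nu<0$, the minimizer being produced either by the discrete concentration--compactness scheme or, equivalently, through Proposition \ref{proposition:equ} together with Theorem \ref{theorem:solution}; and by Weinstein's criterion \cite{Weinstein} the absence of an excitation threshold is exactly the assertion $\underline{E}_\nu<0$ for all $\nu>0$, whereas existence of ground states above a threshold is $\underline{E}_\nu<0$ for $\nu$ large. Thus it suffices, in each parameter range, to exhibit a single $x\in S_\nu$ with $E(x)<0$. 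Writing $x=\sqrt{\nu}\,\psi$ with $\|\psi\|_{l^2}=1$ turns the energy (\ref{eq:energy}) into $E(\sqrt{\nu}\,\psi)=\tfrac{\nu}{2}\langle-\Delta_\delta\psi,\psi\rangle-\tfrac{\gamma}{2\sigma+2}\,\nu^{\sigma+1}\|\psi\|_{l^{2\sigma+2}}^{2\sigma+2}$, where $\langle-\Delta_\delta\psi,\psi\rangle=\|\nabla\psi\|_{l^2}^2-V_0\psi_0^2$, so that only the sign competition between a term linear in $\nu$ and one of order $\nu^{\sigma+1}$ has to be understood.

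For the attractive case (item 1) I would choose $\psi$ to be the normalized linear bound state created by the point defect, namely the eigenfunction of $-\Delta_\delta$ associated with its negative lowest eigenvalue, whose existence for $V_0>0$ is precisely the negative-energy bound state recorded in the introduction. For this $\psi$ one has $\langle-\Delta_\delta\psi,\psi\rangle<0$, so the term linear in $\nu$ is already negative and $E(\sqrt{\nu}\,\psi)<0$ for every $\nu>0$ and every $\sigma>0$; hence $\underline{E}_\nu<0$ for all $\nu$, which is the claimed absence of a threshold.

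The subcritical repulsive case (first half of item 2) is treated oppositely, by spreading the mass out. I would fix a nonnegative profile and build a family $\psi^{(L)}\in S_1$ essentially constant on a box of side $L$ with height of order $L^{-d/2}$; a direct count of the lattice sums gives the scalings $\|\nabla\psi^{(L)}\|_{l^2}^2\sim L^{-2}$, $(\psi^{(L)}_0)^2\sim L^{-d}$ and $\|\psi^{(L)}\|_{l^{2\sigma+2}}^{2\sigma+2}\sim L^{-d\sigma}$, whence $E(\sqrt{\nu}\,\psi^{(L)})\sim \tfrac{\nu}{2}L^{-2}+\tfrac{|V_0|}{2}\nu L^{-d}-c\,\nu^{\sigma+1}L^{-d\sigma}$. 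For the negative nonlinear term to overcome both the gradient term and the repulsive defect term as $L\to\infty$ one needs simultaneously $d\sigma<2$ and $d\sigma<d$, i.e. $\sigma<\min\{1,2/d\}$; in that regime $E(\sqrt{\nu}\,\psi^{(L)})<0$ for $L$ large and every $\nu>0$, giving $\underline{E}_\nu<0$ and no threshold.

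For the supercritical regime (second half of item 2) I would instead concentrate the whole mass at the defect site, $\psi=\delta_{\cdot,0}$, for which $\|\nabla\psi\|_{l^2}^2=2d$, $\psi_0^2=1$ and $\|\psi\|_{l^{2\sigma+2}}^{2\sigma+2}=1$, so that $E(\sqrt{\nu}\,\psi)=\nu\big(d-\tfrac{V_0}{2}\big)-\tfrac{\gamma}{2\sigma+2}\nu^{\sigma+1}$. Provided $d-V_0/2>0$ (which for $d=1$ is $V_0<2$), this is negative exactly when $\nu^{\sigma}>(\sigma+1)(2d-V_0)/\gamma$, which for $d=1$ is the threshold $\nu_{thresh}$ of the statement; for such $\nu$ we obtain $\underline{E}_\nu<0$ and, by the reduction in the first paragraph, a ground state on $S_\nu$. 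I expect the main difficulty to lie in the bookkeeping that pins down the sharp dimensional thresholds $\min\{1,2/d\}$ and $\max\{1,2/d\}$: on the no-threshold side one must verify that the discrete sums obey the above continuum scalings uniformly in $L$, while upgrading ``$\underline{E}_\nu<0$ for large $\nu$'' to a genuine threshold requires the complementary bound $\underline{E}_\nu\ge0$ for small $\nu$ when $\sigma\ge\max\{1,2/d\}$. The latter I would derive from a discrete Gagliardo--Nirenberg inequality of the form $\|x\|_{l^{2\sigma+2}}^{2\sigma+2}\le C\,\|\nabla x\|_{l^2}^{d\sigma}\|x\|_{l^2}^{2\sigma+2-d\sigma}$, the supercritical range being exactly where this term cannot be absorbed by the gradient energy at small mass, while the defect contribution is controlled through $\psi_0^2\le\|\psi\|_{l^2}^2$ together with $V_0<2$.
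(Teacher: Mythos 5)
Your proposal is correct and follows the same overall strategy as the paper: reduce each claim to exhibiting $\underline{E}_\nu<0$ via Weinstein's criterion together with the \textbf{M1}/\textbf{M2} equivalence of Proposition \ref{proposition:equ}, then test the energy on three kinds of trial states (the linear defect mode for $V_0>0$, delocalized states in the subcritical repulsive case, single-site concentration in the supercritical case), supplemented by a discrete interpolation inequality for the complementary small-mass bound. The differences are in execution rather than in route. The paper works throughout with the single one-parameter exponential family $x_n=A\eta^{|n|}$ normalized to mass $\nu$, and locates the two regimes by where the infimum over $\eta$ of the resulting energy expression is attained ($\eta\to 1^-$ when $\sigma<\min\{1,2/d\}$, $\eta\to 0^+$ when $\sigma\ge\max\{1,2/d\}$); your smooth box profiles of width $L$ reproduce exactly the same scaling competition (your $L^{-2}$, $L^{-d}$, $L^{-d\sigma}$ are the paper's $\epsilon^{2}$, $\epsilon^{d}$, $\epsilon^{d\sigma}$ with $\epsilon=1-\eta$), and your exact single-site state $\delta_{\cdot,0}$ is precisely the $\eta\to 0^+$ endpoint of the paper's family, so the content is identical. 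Three remarks. First, your single-site computation keeps the dimension explicit and yields the condition $\nu^{\sigma}>(\sigma+1)(2d-V_0)/\gamma$, whereas the paper's expression (\ref{eq:Hnu}) produces $2-V_0$ in every dimension; your constant agrees with the statement only for $d=1$, and it is the one that follows from a direct evaluation of the energy at the single-site state, so the discrepancy for $d\ge 2$ lies in the paper, not in your argument. Second, like the paper, you identify a condition on $\nu^{\sigma}$ with the stated threshold on $\nu$ without taking the $\sigma$-th root; this is exact only for $\sigma=1$, though it does not affect the qualitative conclusion that an explicit positive threshold exists. Third, for the small-mass lower bound you should invoke the paper's discrete interpolation inequality (\ref{eq:interpolation}) (valid for $\sigma\ge 2/d$) rather than the continuum Gagliardo--Nirenberg form you wrote, whose discrete validity would itself require proof; note also that for $V_0<0$ the defect contribution $-\tfrac{V_0}{2}x_0^2$ to (\ref{eq:energy}) is nonnegative and can simply be dropped, so no control of $x_0^2$ or hypothesis $V_0<2$ is needed in that step.
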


 The additional contribution of an attractive delta potential in the focusing DNLS prevents the existence of an excitation threshold for ground states. In other words, for all $\nu>0$ and all $\sigma >0$ there exist ground state solutions. Indeed, in this case the linear and nonlinear localization mechanisms conspire lowering the total energy.

For focusing nonlinearity $\gamma>0$, repulsive delta potential $V_0<0$, and supercritical value of the degree of  nonlinearity, i.e. $\sigma>\min\{1,2/d\}$, the condition $\nu>\nu_{exc}>0$ is necessary and sufficient for the existence of ground states, i.e. $\underline{H}_\nu=2\underline{E}_\nu<0$. When the degree of the nonlinearity is subcritical, i.e. $\sigma<\min\{1,2/d\}$,  ground state solutions are supported for any mass. Consequently,  ground states of arbitrarily small $l^2$ norm exist.
Note that a repulsive delta potential counteracts the nonlinear localization mechanism. Remarkably, there exist  localized states with
frequencies above respectively below the upper respectively lower edge of absolutely continuous spectrum of $\Delta$. Interestingly, as will be further explained at the end of  Section \ref{section:threshold} in  Remark \ref{remark:simultaneous}
these localized states have the form of a staggering defect mode respectively breather simultaneously in the same system.
This is a repercussion of the spatial discreteness of the system, and therefore not present in the continuum system. 

Conversely,  defocusing nonlinearity weakens the localization capability of an attractive delta potential. That is,  there is an {\it upper} threshold, confining the $l^2$-size of the nonlinear term, up to which   the linear bound state persists.

\vspace*{0.5cm}

We emphasize that our excitation thresholds  $\nu_{exc}$  for the formation of breathers differ from those, $\nu_{crit}$,  considered in \cite{thresh4},\cite{thresh2},\cite{thresh3}. To be precise, explicit lower bounds for the energy (interpreted as the mass, i.e. the $l^2$ norm) of breathers are presented. These bounds depend on the dimension, the parameters of the DNLS and the frequency of the breathers. Their relevance is that
for a set of given parameters of the DNLS and the breather frequency they determine lower bounds on the power below which nontrivial breathers cannot exist.

Moreover, $\nu_{crit}$ accounts for {\it any}
 solution $u_{1\le n\le N}(t)=x_{1\le n\le N}\exp(i\omega t)\in l^2(\mathbb{Z}_N)$ of the associated stationary system (e.g. multi-hump states and not necessarily the ground state(s)). In comparison,  our excitation thresholds  $\nu_{exc}$ provide  stronger results as they give  thresholds for the existence of {\it ground states}. In fact, the thresholds
$\nu_{crit}$ \cite{thresh4},\cite{thresh2},\cite{thresh3}
are  not sharp in the sense of a  threshold value for the existence--nonexistence of breathers as $\nu_{exc}>\nu_{crit}$ \cite{thresh4},\cite{thresh2}.  Notice that,
while  the excitation threshold $\nu_{crit}$ in \cite{thresh4} determines for {\it any} dimension the smallest value of the power   breathers can have,  the excitation thresholds dealt with in  \cite{Weinstein} as well as in the current paper  occur only for overcritical dimensions $d\ge d_{crit}$ as stated in Theorem \ref{theorem:threshold}.

Our  third main result contained in Section \ref{section:scattering}
concerns the asymptotic properties of the solutions to system (\ref{eq:system1}).

 \begin{theorem}\label{theorem:scatterin1}
Consider system (\ref{eq:system1}) with focusing nonlinearity $\gamma>0$ and $V_0=0$.
 For  $\sigma \ge \max\{1,2/d\}$  let $2(\sigma+1)\ge p\ge 4d\sigma/(2d\sigma-3)$ and
 $u\in C([0,\infty),l^2)$ be a solution to (\ref{eq:system1}) for initial data
 $u_0\in l^2$ such that $||u_0||_{l^2}^2\le \nu_{exc}$.
 Then there exists  $v^{\pm} \in l^{p}$, $p>2$, such that
 \begin{equation}
  \left\|\left(\exp(-i\Delta t)\right)u(t)-v^{\pm} \right\|_{l^p}\le C \left\| u_0\right\|_{l^{p^\prime}}^{2\sigma+1}\,
  |t|^{-\frac{d(p-2)}{3p}},\,\,\,\,\forall |t| > 0.\nonumber
 \end{equation}
 The solution $u(t)$ scatters forward and backward in time in the sense that there exists $v^{\pm}\in l^2$ such that
 \begin{equation}
 \left\|u(t)-\left(\exp(i\Delta t)\right)v^{\pm}  \right\|_{l^p}\longrightarrow 0\,\,\,{\rm as}\,\,\,
 t\rightarrow  \pm \infty.\nonumber
\end{equation}
 \end{theorem}
The proof of the assertions in Theorem \ref{theorem:scatterin1}  establishes that
the solutions of the nonlinear problem  exhibit asymptotically free behavior. Notably, the asymptotic features of the solutions  prove also Weinstein's conjecture in \cite{Weinstein}
for $\sigma \ge 2$: If $\nu(u_0)=||u_0||_{l^2}^2<\nu_{exc}$, then for any $p\in (2,\infty]$ the solutions decay, that is, $ \left\| u(t)\right\|_{l^p}\rightarrow 0,\,\,\,{\rm as}\,\,\,|t|\rightarrow \infty$. More details about the proof of the conjecture are given at the end of Section 5.

For $V_0 <0$ we have the following statement:
\begin{theorem}\label{theorem:scatterin2}
For $d=1$ consider system (\ref{eq:system1}) with  focusing nonlinearity $\gamma>0$ and $V_0<0$. For $\sigma >9/4$  let $2(\sigma+1)\ge p\ge 4\sigma/(2\sigma-3)$ and
 $u\in C([0,\infty),l^2)$ be a solution to (\ref{eq:system1}) for initial data
 $u_0\in l^2$ such that $||u_0||_{l^2}^2\le \nu_{exc}$.
 Then there exists  $v^{\pm} \in l^{p}$, $p>2$, such that
 \begin{equation}
  \left\|\left(\exp(-i\Delta_\delta t)\right)P_{ac}u(t)-v^{\pm} \right\|_{l^p}\le C \left\| u_0\right\|_{l^{p^\prime}}^{2\sigma+1}\,
  |t|^{-\frac{p-2}{3p}},\,\,\,\,\forall |t| > 0,\nonumber
 \end{equation}
 where $P_{ac}$ is the projection onto the continuous spectral subspace of $\Delta_\delta$.
 The solution $u(t)$ scatters forward and backward in time in the sense that there exists $v^{\pm}\in l^2$ such that
 \begin{equation}
 \left\|u(t)-\left(\exp(i\Delta_\delta t)\right)P_{ac}v^{\pm}  \right\|_{l^p}\longrightarrow 0\,\,\,{\rm as}\,\,\,
 t\rightarrow  \pm \infty.\nonumber
\end{equation}
 \end{theorem}

 \vspace*{0.5cm}

For a summary of the  distinction between the study of the DNLS with linear impurities \cite{thresh4} and  our present one of the $\delta$DNLS (\ref{eq:system1}), we note that  our investigations go beyond those in \cite{thresh4}
as (i) our analysis is not restricted to finite lattices but rather deals with infinite lattices, (ii) our
excitation thresholds concern ground state solutions as opposed to any
solution in $l^2$ in \cite{thresh4} which are not necessarily ground states, (iii) we present here a detailed analysis of the interplay between the two localization mechanisms, being of linear respectively nonlinear nature, inherent in the $\delta$DNLS (\ref{eq:system1}), and (iv), asymptotic behavior was not considered  at all in \cite{thresh4}.  Most of our analysis concerns focusing nonlinearity $\gamma>0$. The impact of nonfocusing nonlinearity $\gamma<0$ on the persistence of linear impurity modes is then briefly discussed at the end of Section 4.

The paper is organized as follows: 
We begin with a brief summary of linear defect modes and the impact of weak nonlinearity on their persistence. In section \ref{section:breathers}, using variational methods,  we prove the existence of  standing ground state solutions for focusing nonlinearity under the influence of an attractive respectively repulsive delta potential.
Section \ref{section:threshold} is concerned with the existence of an excitation threshold for the presence of ground states respectively   perseverance of defect modes. Subsequently, in section \ref{section:scattering} we investigate the asymptotic behavior of the solutions and  prove that they scatter for supercritical degree of nonlinearity and subcritical mass.

\section{Linear defect modes}\label{section:linear}
\setcounter{equation}{0}
In the linear limiting case, arising for $\gamma=0$ in (\ref{eq:system1}), the system 
\begin{equation}
i\frac{d v_n}{dt}+(\Delta v)_n+V_0 \delta_{n,0}v_n=0,\,\,\,n \in \mathbb{Z}^d.\label{eq:systemlin}
\end{equation}
supports localized solutions (also called defect modes) generated by the presence of the point defect destroying the  spatial translational invariance. 
These stationary solutions are of the form $v_n(t)=x_n \exp(-i \omega t)$, $x_n\in \mathbb{R}$ with a spectral parameter (eigenenergy) $\omega$ and fulfil the stationary system
\begin{equation}
 \omega x_n+(\Delta x)_n+V_0 \delta_{n,0} x_n=0,\,\,\,n \in \mathbb{Z}^d.\label{eq:statlin}
\end{equation}
For $V_0>0$  there exist a  non-staggering  bound state
\begin{equation}
 x_n=A\eta_+^{|n|},\,\,\,0<\eta_+ <1,\nonumber
\end{equation}
\begin{equation}
 \eta_+=\frac{{1}}{2d}\left(\sqrt{{V_0}^2+4d^2}-{V_0}\right),\nonumber
\end{equation}
with $|n|=|n_1|+...+|n_d|$ and amplitude $A>0$.

For $V_0<0$ there exist a staggering bound state
\begin{equation}
 x_n=A (-1)^n \eta_-^{|n|},\,\,\,-1<\eta_-<0,\nonumber
\end{equation}
\begin{equation}
\eta_-=-\frac{{1}}{2d}\left(\sqrt{{V_0}^2+4d^2}+{V_0}\right),\nonumber
\end{equation}
with $|n|=|n_1|+...+|n_d|$ and amplitude $A>0$.

The amplitude $A$ is related to the conserved $l^2$ norm
(mass)
\begin{equation}
 \nu=\sum_{n \in \mathbb{Z}^d}|x_n|^2=\left(A^2\frac{1+\eta_{\pm}^2}{1-\eta_{\pm}^2}\right)^d.\nonumber
\end{equation}

The associated eigenvalues are determined by
\begin{equation}
 {\omega}_+=2d- \sqrt{{V_0}^2+4d^2},\,\,\,\left( {\omega}_-=2d+ \sqrt{{V_0}^2+4d^2}\right).\label{eq:eigenvalue0}
\end{equation}
The eigenvalues  $\omega_+<0$ ($\omega_->4d$) lie below (above) the lower edge of absolutely continuous spectrum of $\Delta$.  
Note that the defect modes are exponentially localized at the origin of the lattice (single-hump).

Next, we briefly discuss the impact of a (small) nonlinear term on the impurity modes. To be precise, we consider the system
\begin{equation}
i\frac{dw_n }{dt}+(\Delta w)_n+ \gamma|w|^{2\sigma}w_n+V_0 \delta_{n,0}w_n=0,\,\,\,n \in \mathbb{Z}^d.\label{eq:systemnonlin}
\end{equation}
Obviously, if $\gamma>0$ and $V_0>0$ respectively $\gamma<0$ and $V_0<0$, the linear and nonlinear localization mechanisms enhance each other opposed to the case if  $\gamma$ and $V_0$ are of opposite sign.
In the latter case, if $V_0>0$ ($V_0<0$) an additional  negative (positive) nonlinear term, i.e. $\gamma<0$ ($\gamma>0$), in the equation of motion leads to an increase (decrease) of the negative (positive) total energy (\ref{eq:H}) of the impurity mode leading to  a reduction of its  degree of localization.
Nevertheless, for sufficiently low amplitude ($l^2$ norm), going along with weak nonlinearity, we expect sustained  impurity modes even when $V_0$ and $\gamma$ have different sign.

More specifically, for an attractive delta potential, when the non-staggering defect mode has 
negative energy, an additional  positive nonlinear term $\gamma|u|^{2\sigma+1}>0$ in the equations of motion serves constructively 
to even lower the total energy whereas negative nonlinearity for $\gamma<0$  leads to an increase of 
the total energy. The energy remains negative though if the nonlinear term is sufficiently small. 
Similar arguments hold for the staggering defect mode.
Therefore, we expect that there are critical values of the mass that the defect modes can sustain. 
In fact, in section \ref{section:threshold} we further analyze the impact of nonlinearity  on the
existence of defect mode(s) where we also derive explicit formulaes for upper and lower
thresholds of the mass for which the defect modes survive in the presence of nonlinearity.

\section{Ground state solutions}\label{section:breathers}
In this section we consider ground states of system
(\ref{eq:system1}) with focusing nonlinearity as the solutions of minimization problem {\bf M1}.

\vspace*{0.5cm}

Concerning  the action functional (\ref{eq:J}), clearly $J(x)\in C^1(l^2,\mathbb{R})$ and
\begin{equation}
 <J^\prime(x),z)>=
 \sum_{n \in \mathbb{Z}^d} (\nabla x)_n (\nabla z)_n
 -\omega \sum_{n \in \mathbb{Z}^d}  x_n z_n-\gamma (x^{2\sigma+1},z)_{l^2}-V_0\sum_{n \in \mathbb{Z}^d} \delta_{n,0} x_n z_n,\nonumber
\end{equation}
where  $z\in l^2$ and $(\,,\,)_{l^2}$  denotes the  scalar product $(x^{(1)},x^{(2)})_{l^2}=\sum_n  x^{(1)}_n {x}_n^{(2)}$ for all $x^{(1)},x^{(2)} \in l^2$.
Note that $J$ is not necessarily bounded from below on $l^2$. However, $J$ can be rendered  bounded from below on a suitable subset of $l^2$, namely the so-called Nehari manifold.

In detail, we introduce the Nehari functional
\begin{equation}
 I(x)=<J^\prime(x),x>=
 \sum_{n \in \mathbb{Z}^d} (\nabla x)_n^2
 -\omega \sum_{n \in \mathbb{Z}^d}   x_n^2-\gamma \sum_{n \in \mathbb{Z}^d}x_n^{2(\sigma+1)}  -V_0\sum_{n \in \mathbb{Z}^d} \delta_{n,0}  x_n^{2},\nonumber
\end{equation}
and the Nehari manifold
\begin{equation}
 \mathcal{N}=\left\{x\in l^2:\,I(x)=0, x\neq 0 \right\}. \label{eq:Nehari}
\end{equation}
$I(x) \in C^1(l^2,\mathbb{R})$ and
\begin{eqnarray}
 <I^\prime(x),z>&=&
 2\sum_{n \in \mathbb{Z}^d} (\nabla x)_n (\nabla z_n)
 -2\omega\sum_{n \in \mathbb{Z}^d}  x_n z_n-2(\sigma+1)\gamma \sum_{n \in \mathbb{Z}^d}  x_n^{2\sigma+1} z_n\nonumber\\
 &-&2V_0\sum_{n \in \mathbb{Z}^d} \delta_{n,0}  x_n z_n.\nonumber
\end{eqnarray}

\vspace*{0.5cm}

We define the fibering map $\Gamma_x(s)=J(sx)$ for $s>0$. Obviously, if $x$ is a local minimizer of $J$ on $\mathcal{N}$, then $\Gamma_x$ possesses a local minimum at $s=1$. Furthermore, for $x \in l^2\setminus \{0\}$, $sx \in \mathcal{N}$ if and only if $\Gamma_x^\prime(s)=0$, a result that follows readily from the fact that $\Gamma_x^\prime(s)=<J^\prime(sx),x>=(1/s)<J^\prime (sx),sx>$. Conclusively, stationary points of the map $\Gamma_x$ correspond to points in $\mathcal{N}$.

\noindent
In the forthcoming we make use of the following theorem relating minimizers on $\mathcal{N}$ and critical points for $J$:

\begin{theorem}\label{theorem:minimizer}
 If  $x_0$ is a local minimizer for $J$
 on $\mathcal{N}$
 and  $<I^\prime(x_0),x_0>\neq 0$, then $J^\prime(x_0)=0$.
\end{theorem}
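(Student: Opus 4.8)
The plan is to read this as a constrained-optimization statement and to realize the Lagrange-multiplier rule concretely through the scaling (fibering) map already introduced before the statement. The hypothesis $\langle I'(x_0), x_0\rangle \neq 0$ is precisely the non-degeneracy condition that makes $\mathcal{N}$ a genuine $C^1$ hypersurface near $x_0$ (in particular it forces $I'(x_0)\neq 0$), and it is what will allow me to project nearby points back onto $\mathcal{N}$ by rescaling. Throughout I work in the Hilbert space $l_w^2$ on which $\mathcal{N}$ is defined, and use that $I(x)=\langle J'(x),x\rangle$ and that $x_0\in\mathcal{N}$ means $I(x_0)=0$.

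First I would fix an arbitrary direction $z\in l_w^2$ and construct an admissible curve inside $\mathcal{N}$ through $x_0$ in that direction. To this end consider $\Phi(t,s)=I\bigl(s(x_0+tz)\bigr)$ for $(t,s)$ near $(0,1)$. Since $I\in C^1$, the map $\Phi$ is $C^1$, with $\Phi(0,1)=I(x_0)=0$ and $\partial_s\Phi(0,1)=\langle I'(x_0),x_0\rangle\neq 0$ by hypothesis. The implicit function theorem then yields a $C^1$ function $s(t)$ with $s(0)=1$ and $\Phi(t,s(t))=0$; that is, the curve $t\mapsto s(t)(x_0+tz)$ lies on $\mathcal{N}$ for $|t|$ small (it stays in $l_w^2\setminus\{0\}$ since $x_0,z\in l_w^2$ and $s(t)\neq 0$) and passes through $x_0$ at $t=0$.

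Next, because $x_0$ is a local minimizer of $J$ on $\mathcal{N}$ and the curve stays on $\mathcal{N}$, the scalar function $g(t):=J\bigl(s(t)(x_0+tz)\bigr)$ has a local minimum at $t=0$, so $g'(0)=0$. Differentiating by the chain rule gives $g'(0)=\langle J'(x_0),\,s'(0)x_0+z\rangle=s'(0)\langle J'(x_0),x_0\rangle+\langle J'(x_0),z\rangle$. The crucial point is that $\langle J'(x_0),x_0\rangle=I(x_0)=0$ since $x_0\in\mathcal{N}$, so the unknown derivative $s'(0)$ drops out entirely and we are left with $\langle J'(x_0),z\rangle=0$. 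As $z\in l_w^2$ was arbitrary, $J'(x_0)$ annihilates $l_w^2$; since $l_w^2$ is dense in $l^2$ and $J'(x_0)\in(l^2)^\ast$ is continuous (recall $J\in C^1(l^2,\mathbb{R})$), it follows that $J'(x_0)=0$ as desired.

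I expect the only genuine obstacle to be the careful justification of the implicit-function step in the infinite-dimensional setting: one must verify that $I$ is truly $C^1$ on $l_w^2$ (equivalently that the nonlinear map $x\mapsto\sum_n x_n^{2\sigma+2}$ and its Fréchet derivative are well defined and continuous there), and that the scalar non-degeneracy $\partial_s\Phi(0,1)\neq 0$ furnished by the hypothesis is enough to solve for $s(t)$ in the single variable $s$. Everything after that is the standard first-order computation. Equivalently, one may bypass the explicit curve and invoke the abstract Lagrange-multiplier theorem to write $J'(x_0)=\lambda\, I'(x_0)$ for some $\lambda\in\mathbb{R}$; pairing with $x_0$ then gives $\lambda\,\langle I'(x_0),x_0\rangle=\langle J'(x_0),x_0\rangle=I(x_0)=0$, and the non-degeneracy assumption forces $\lambda=0$, hence $J'(x_0)=0$.
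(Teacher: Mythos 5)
Your proposal is correct, and your primary argument takes a genuinely different (and more self-contained) route than the paper. The paper's proof is exactly your closing remark: it invokes the abstract Lagrange multiplier rule to write $J^\prime(x_0)=\lambda I^\prime(x_0)$, pairs with $x_0$ to get $I(x_0)+\lambda\langle I^\prime(x_0),x_0\rangle=0$, and concludes $\lambda=0$ from $I(x_0)=0$ and the non-degeneracy hypothesis. Your main argument instead \emph{proves} the multiplier rule in this setting: you build admissible curves $t\mapsto s(t)(x_0+tz)$ inside $\mathcal{N}$ via the implicit function theorem (whose solvability in $s$ is exactly where $\langle I^\prime(x_0),x_0\rangle\neq 0$ enters), differentiate $J$ along them, and observe that the unknown $s^\prime(0)$ is annihilated because $\langle J^\prime(x_0),x_0\rangle=I(x_0)=0$. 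What this buys you is rigor and transparency: the paper's one-line appeal to Lagrange multipliers on a Banach manifold tacitly requires the constraint to be non-degenerate at $x_0$ (i.e.\ $I^\prime(x_0)\neq 0$, which follows from the hypothesis but is never stated), whereas your construction uses the hypothesis exactly where it is needed and makes the geometry of $\mathcal{N}$ explicit. The cost is length and the need to verify $I\in C^1$ on $l_w^2$, which is harmless since $l_w^2\hookrightarrow l^2\hookrightarrow l^{2\sigma+2}$. Your final density step ($l_w^2$ dense in $l^2$, $J^\prime(x_0)$ continuous on $l^2$) is sound and in fact slightly strengthens the conclusion compared with the paper, which only records $\langle J^\prime(x_0),z\rangle=0$ for $z\in l_w^2$ in the subsequent remark.
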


\begin{proof} $x_0$ being a minimizer of $J$ on $\mathcal{N}$
means that by the method of Lagrange multipliers there exists a $\lambda \in \mathbb{R}$ such that $J^\prime(x_0)=\lambda I^\prime(x_0)$. Hence,
\begin{equation}
<J^\prime(x_0),z)>+\lambda <I^\prime(x_0),z>=0,\label{eq:infer}
\end{equation}
for any $z \in l^2$, yielding for $z=x_0$
\begin{equation}
 I(x_0)+\lambda <I^\prime(x_0),x_0>=0.\nonumber
\end{equation}
As $x_0 \in \mathcal{N}$, $I(x_0)=0$.
Furthermore, since $<I^\prime(x_0),x_0>\neq 0$ it follows that $\lambda=0$ and the proof is finished.
\end{proof}

\begin{remark}
 $x_0$ is a weak solution of (\ref{eq:stat1}) because from (\ref{eq:infer}) we infer that $<J^\prime(x_0),z>=0$ for any $z \in l^2$.
\end{remark}

\vspace*{0.5cm}

\subsection{Nonempty Nehari manifold}\label{subsection:Nehari}

\begin{proposition}\label{proposition:Nehari}
Assume $\gamma > 0$. Let $0>\omega$ and
\begin{equation}
V_0<-\omega.
\label{eq:ass1}
\end{equation}
Then the Nehari manifold $\mathcal{N}\in C^1(l^2,\mathbb{R})$ is  nonempty.
\end{proposition}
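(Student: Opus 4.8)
The plan is to use the fibering map $\Gamma_x(s)=J(sx)$, $s>0$, introduced above, together with the fact recorded there that $sx\in\mathcal{N}$ if and only if $\Gamma_x'(s)=0$. It therefore suffices to fix one nonzero $x\in l_w^2$ and produce a single $s^*>0$ with $s^*x\in\mathcal{N}$; this immediately yields $\mathcal{N}\neq\emptyset$.

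First I would exploit the homogeneity of the three terms making up $I$. Abbreviating the quadratic part by
\[
A(x)=\left\|\nabla x\right\|_{l^2}^2-\omega\left\|x\right\|_{l^2}^2-V_0 x_0^2
\]
and the nonlinear part by $B(x)=\gamma\left\|x\right\|_{l^{2\sigma+2}}^{2\sigma+2}$, one has $I(sx)=s^2A(x)-s^{2\sigma+2}B(x)$, so that $\Gamma_x'(s)=s^{-1}I(sx)=sA(x)-s^{2\sigma+1}B(x)$. The equation $\Gamma_x'(s)=0$ with $s>0$ is thus equivalent to $s^{2\sigma}=A(x)/B(x)$. Since $\gamma>0$ and $x\neq0$ force $B(x)>0$, this has a (unique) positive root exactly when $A(x)>0$.

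The heart of the matter is therefore the sign of $A(x)$, and this is where the hypothesis $V_0<-\omega$ enters. If $x_0=0$ the defect term vanishes and $A(x)=\left\|\nabla x\right\|_{l^2}^2+(-\omega)\left\|x\right\|_{l^2}^2>0$ because $-\omega>0$. If $x_0\neq0$ I would use the elementary bound $\left\|x\right\|_{l^2}^2\geq x_0^2$ to obtain
\[
A(x)\geq\left\|\nabla x\right\|_{l^2}^2+(-\omega-V_0)\,x_0^2>0,
\]
positivity being guaranteed by $-\omega-V_0>0$. Hence $A(x)>0$ for every nonzero $x\in l_w^2$, the value $s^*=(A(x)/B(x))^{1/(2\sigma)}$ is well defined, and $s^*x\in\mathcal{N}$.

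Since each step is either an elementary homogeneity computation or a coercivity estimate, I do not anticipate a serious obstacle. The one delicate point is the sign bookkeeping: it is precisely the bound $\left\|x\right\|_{l^2}^2\geq x_0^2$ that converts the assumption $V_0<-\omega$ into coercivity of the full quadratic form $A$, preventing it from becoming nonpositive even for an attractive potential $V_0>0$. I would also record at the outset that $A(x)$ and $B(x)$ are finite for $x\in l_w^2\subset l^2\subset l^{2\sigma+2}$, so that all quantities above are legitimate.
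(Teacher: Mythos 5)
Your proof is correct, and it follows the same basic strategy as the paper: fix a nonzero $x\in l_w^2$, scale along the ray $s\mapsto sx$, and locate $s$ with $I(sx)=0$. The execution, however, differs in a worthwhile way. The paper argues by an intermediate-value argument with a case distinction: for $V_0>0$ and for $V_0<0$ separately, it shows $I(sx)>0$ for small $s$ (using $V_0<-\omega$ and $\sigma(-\Delta)\subset[0,4d]$) and $I(sx)\to-\infty$ as $s\to\infty$, and then invokes continuity to produce a zero $\tilde s$. You instead exploit the exact homogeneity $I(sx)=s^2A(x)-s^{2\sigma+2}B(x)$ and solve for the root in closed form, $s^*=\left(A(x)/B(x)\right)^{1/(2\sigma)}$, reducing the whole proposition to the single coercivity estimate $A(x)\geq\left\|\nabla x\right\|_{l^2}^2+(-\omega-V_0)\,x_0^2>0$, obtained from $\left\|x\right\|_{l^2}^2\geq x_0^2$, which handles the attractive and repulsive cases uniformly. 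Your route buys two things the paper's argument does not state at this point: uniqueness of the intersection of each ray with $\mathcal{N}$ (a fact the paper only establishes later, in Claim 1 of its proposition on the equivalence of the minimization problems {\bf M1} and {\bf M2}), and the elimination of the sign case split. The paper's intermediate-value version is slightly more robust in spirit, since it would survive perturbations of the nonlinear term that destroy exact power homogeneity, but for the equation at hand your argument is tighter and fully rigorous.
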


\begin{proof}
We have
\begin{eqnarray}
I(sx)&=&
s^2\left(\sum_{n \in \mathbb{Z}^d} (\nabla x)_n^2
-\omega \sum_{n \in \mathbb{Z}^d}  x_n^2-V_0 x_0^{2}\right)-s^{2(\sigma+1)} \gamma \sum_{n \in \mathbb{Z}^d}x_n^{2(\sigma+1)}\nonumber\\
&\equiv&s^2A_x-s^{2(\sigma+1)}B_x,
\end{eqnarray}
where $B_x>0$ and due to the assumptions $\omega<0$ and $V_0<-\omega$ it holds  $A_x>0$. Hence, there must exist an $\tilde{s}>0$ such that $I(\tilde{s}x)=0$. Therefore $\tilde{s}x \in \mathcal{N}$. Thus, the Nehari manifold is nonempty.
\end{proof}

\vspace*{0.5cm}

\subsection{Existence of  ground state solutions}\label{subsection:existence}

\begin{proposition}\label{proposition:gtzero}
 If $V_0<-\omega$, then for every $x\in \mathcal{N}$, one has
\begin{eqnarray}
 ||x||_{l^2}\ge \alpha>0,\nonumber\\
 J(x)\ge \beta >0.\nonumber
\end{eqnarray}
 \end{proposition}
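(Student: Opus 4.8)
The plan is to use the defining identity $I(x)=0$ on $\mathcal{N}$ to trade the quadratic part of the action against the nonlinear part, after first establishing that the quadratic form is coercive on $l^2$ precisely under the hypothesis $V_0<-\omega$ (recall $\omega<0$). To this end I would isolate the quadratic form
$$Q(x)=\sum_{n\in\mathbb{Z}^d}(\nabla x)_n^2-\omega\sum_{n\in\mathbb{Z}^d}x_n^2-V_0 x_0^2,$$
so that $I(x)=Q(x)-\gamma\|x\|_{l^{2\sigma+2}}^{2\sigma+2}$ and $J(x)=\tfrac12 Q(x)-\tfrac{\gamma}{2\sigma+2}\|x\|_{l^{2\sigma+2}}^{2\sigma+2}$.

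The central estimate is the coercivity $Q(x)\ge c\,\|x\|_{l^2}^2$ with $c>0$ depending only on $\omega,V_0$. Since $\sigma(-\Delta)\subset[0,4d]$ the gradient term is nonnegative, and the single-site bound $x_0^2\le\|x\|_{l^2}^2$ lets me control the defect contribution. Distinguishing $V_0\ge0$ (where $-V_0 x_0^2\ge-V_0\|x\|_{l^2}^2$) from $V_0<0$ (where $-V_0 x_0^2\ge0$), I obtain $Q(x)\ge c\,\|x\|_{l^2}^2$ with $c=\min\{-\omega,\,-\omega-V_0\}$, which is strictly positive exactly because $\omega<0$ and $V_0<-\omega$.

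For the first claim I would then use $I(x)=0$, i.e. $Q(x)=\gamma\|x\|_{l^{2\sigma+2}}^{2\sigma+2}$, together with the embedding $\|x\|_{l^{2\sigma+2}}\le\|x\|_{l^2}$ (valid since $2\sigma+2\ge2$) to write
$$c\,\|x\|_{l^2}^2\le\gamma\|x\|_{l^{2\sigma+2}}^{2\sigma+2}\le\gamma\|x\|_{l^2}^{2\sigma+2}.$$
Dividing by $\|x\|_{l^2}^2>0$ (legitimate since $x\neq0$ on $\mathcal{N}$) gives $\|x\|_{l^2}^{2\sigma}\ge c/\gamma$, hence $\|x\|_{l^2}\ge\alpha:=(c/\gamma)^{1/(2\sigma)}>0$.

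For the second claim, substituting $Q(x)=\gamma\|x\|_{l^{2\sigma+2}}^{2\sigma+2}$ into $J$ collapses it to $J(x)=\gamma\,\tfrac{\sigma}{2(\sigma+1)}\|x\|_{l^{2\sigma+2}}^{2\sigma+2}$, and since $\gamma\|x\|_{l^{2\sigma+2}}^{2\sigma+2}=Q(x)\ge c\,\|x\|_{l^2}^2\ge c\alpha^2$, I conclude $J(x)\ge\beta:=\tfrac{\sigma}{2(\sigma+1)}\,c\,\alpha^2>0$. The only genuinely delicate point is the uniform coercivity of $Q$: one must verify that $c$ is strictly positive and independent of $x$, which is exactly where the hypothesis $V_0<-\omega$ is essential and where the sign of $V_0$ forces the case distinction; everything else is an algebraic consequence of the Nehari constraint.
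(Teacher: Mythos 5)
Your proof is correct and takes essentially the same route as the paper: both use the Nehari identity $I(x)=0$ to trade the quadratic form for the nonlinear term, drop the nonnegative gradient contribution, control the defect term via $x_0^2\le\|x\|_{l^2}^2$, and finish with the embedding $\|x\|_{l^{2\sigma+2}}\le\|x\|_{l^2}$, arriving at the same constants up to the value of $c$. If anything, yours is slightly more careful: the paper's intermediate estimate $V_0x_0^2\le V_0\|x\|_{l^2}^2$ is valid only for $V_0\ge 0$, whereas your case distinction (using $-V_0x_0^2\ge 0$ when $V_0<0$) produces the coercivity constant $c=\min\{-\omega,\,-\omega-V_0\}$ and covers both signs of the defect strength under the single hypothesis $V_0<-\omega$.
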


\begin{proof}
 $I(x)=0$ implies $-\omega ||x||_{l^2}^2=
 -||\nabla x||_{l^2} +\gamma ||x||_{l^{2(\sigma+1)}}^{2(\sigma+1)}+V_0x_0^2$.
Whence,
\begin{eqnarray}
-\omega ||x||_{l^2}^2&\le&\gamma ||x||_{l^{2(\sigma+1)}}^{2(\sigma+1)}+V_0x_0^2\nonumber\\
&\le& \gamma ||x||_{l^2}^{2(\sigma+1)}+V_0||x||_{l^2}^2,\nonumber
\end{eqnarray}
from which follows
\begin{equation}
 ||x||_{l^2}\ge \left(\frac{-(\omega+V_0)}{\gamma}\right)^{1/(2\sigma)}:=\alpha>0.
\end{equation}
Furthermore,
\begin{eqnarray}
 J(x)&=& \frac{1}{2}\left(1-\frac{1}{1+\sigma} \right)\left[
 ||\nabla x||_{l^2}^2
 - \omega ||x||_{l^2}^2-V_0x_0^2\right]\nonumber\\
 &\ge& \frac{1}{2}\left(1-\frac{1}{1+\sigma} \right)\left[-\omega ||x||_{l^2}^2-V_0x_0^2\right]\nonumber\\
 &\ge& \frac{1}{2}\left(1-\frac{1}{1+\sigma} \right)(-(\omega+V_0) ||x||_{l^2}^2\nonumber\\
 &\ge& \frac{1}{2}\left(1-\frac{1}{1+\sigma} \right)\left(-(\omega+V_0)\right)^{1+1/(2\sigma)} \gamma^{-1/(2\sigma)}:=\beta,
 \end{eqnarray}
 and the proof is finished.
\end{proof}

\begin{remark}\label{remark:bounded}
 Let $\{x_k\}\in \mathcal{N}$ be a minimizing sequence.
 Lemma \ref{proposition:gtzero} bounds the $J(x_k)$ uniformly, i.e. $J(x_k)\ge \beta>0$ for all $k\in \mathbb{N}$,  implying
 \begin{equation}
  \lim_{k\rightarrow \infty}J(x_k)=\inf_{x\in \mathcal{N}}J(x)>0.\label{eq:Jgtzero}
 \end{equation}
 Furthermore,   from
 \begin{eqnarray}
  J(x_k)\ge \frac{1}{2}\left(1-\frac{1}{1+\sigma} \right)(-(\omega+V_0) ||x_k||_{l^2}^2\nonumber
 \end{eqnarray}
 follows
 \begin{equation}
 ||x_k||_{l^2}\le \left(  \left(\frac{1}{2}-\frac{1}{2\sigma+2}\right)(-(\omega+V_0)\right)^{-1}S,
\end{equation}
where $S=\sup_{x\in \mathcal{N}}J(x)$. As $J(x)\le (4d-\omega-V_0)||x||_{l^2}^2=(4d-\omega-V_0) \nu$, $\omega<0$, and $V_0<-\omega$, we infer $S<\infty$ for $x\in \mathcal{N}$.
Thus, $\{x_k\}$ is bounded in $l^2$.
\end{remark}

\vspace*{0.5cm}

We are ready to prove Theorem \ref{theorem:solution}.

\begin{proof}
In order to prove the assertions of Theorem \ref{theorem:solution} we establish that there exists a minimizer for $J$ on $\mathcal{N}$ which is a critical point of $J(x)$ and thus a nontrivial solution of (\ref{eq:stat1}).
Note that for all $x \in l^2$, there exists an $a>0$ such that $-(\Delta x,x)_{l^2}-\omega \sum_{n \in \mathbb{Z}^d}  x_n^2\ge a ||x||_{l^2}^2$.

\vspace*{0.5cm}

\noindent 1)
Attractive delta potential $V_0>0$.

Let $\{x_k\}\in \mathcal{N}$ be a minimizing sequence. That is,
$\lim_{k\rightarrow \infty}J(x_k)=\inf_{x\in \mathcal{N}}J(x)$.
As noted in Remark \ref{remark:bounded},  $\{x_k\}$ is bounded in $l^2$.
  Hence, there exists a subsequence (not relabeled) that converges weakly in $l^2$, i.e. $x_k \rightharpoonup \tilde{x}$.
 In order to show  strong convergence, $x_k \rightarrow \tilde{x}$ in $l^2$, we assume  for a  contradiction
that
\begin{equation}
||\tilde{x}||_{l^2}<\underline{\lim}_{k\rightarrow \infty}||x_k||_{l^2}.\label{eq:contra}
\end{equation}
Then
\begin{eqnarray}
&&||\nabla \tilde{x}||_{l^2}^2-\omega ||\tilde{x}||_{l^2}^2-\gamma ||\tilde{x}||_{l^{2(\sigma+1)}}^{2(\sigma+1)}-V_0\tilde{x}_0^2\nonumber\\
 &<&\lim_{k \rightarrow \infty}||\nabla x_k||_{l^2}^2-\omega ||x_k||_{l^2}^2-\gamma ||x_k||_{l^{2(\sigma+1)}}^{2(\sigma+1)}-V_0x_{0,k}^2
=0.\nonumber
\end{eqnarray}
Facilitating the map $\Gamma_{{x}}$, satisfying $\Gamma_{{x}}(0)=0$, and
the proof of Proposition \ref{proposition:Nehari}, we have  $\Gamma_{{x}}(s)>0$
for sufficiently small $s$ and $\Gamma_{{x}}(s) \rightarrow -\infty$ for $s \rightarrow \infty$, and $\Gamma_{{x}}(s)$ possesses a unique maximum at $s(x)$ and $s(x)x \in \mathcal{N}$.
One gets
\begin{eqnarray}
 \Gamma_{\tilde{x}}^\prime(1)&=&||\nabla \tilde{x}||_{l^2}^2-\omega ||\tilde{x}||_{l^2}^2-\gamma ||\tilde{x}||_{l^{2(\sigma+1)}}^{2(\sigma+1)}-V_0\tilde{x}_0^2
 <0.\nonumber
\end{eqnarray}
Hence, there is an $0<s_0<1$ such that $\Gamma_{\tilde{x}}^\prime(s_0)=0$,  that is
$s_0 \tilde{x} \in \mathcal{N}$. Furthermore, $s_0 x_k \rightharpoonup  s_0 \tilde{x}$ in $l^2$,
and as $x_k \in \mathcal{N}$, the map $\Gamma_{x_k}$ attains its maximum at $s=s_0$.
Therefore,
\begin{equation}
 \Gamma_{\tilde{x}}(s_0)=J(s_0 \tilde{x})<\underline{\lim}_{k\rightarrow \infty}J(s_0x_k)\le \lim_{k\rightarrow \infty}J(x_k)=\inf_{x\in \mathcal{N}}J(x),\nonumber
\end{equation}
 contradicting our assumption (\ref{eq:contra}) and we conclude  $x_k\rightarrow \tilde{x}$ in $l^2$.
It then follows $
||\nabla \tilde{x}||_{l^2}^2
-\omega ||\tilde{x}||_{l^2}^2-\gamma ||\tilde{x}||_{l^{2(\sigma+1)}}^{2(\sigma+1)}-V_0\tilde{x}_0^{2}=0$ implying $\tilde{x} \in \mathcal{N}$.
Furthermore,
\begin{eqnarray}
0\stackrel{(\ref{eq:Jgtzero})}{<}\inf_{x\in \mathcal{N}}J(x)=\lim_{k \rightarrow \infty} J(x_k)&=&\left( \frac{1}{2}-\frac{1}{2\sigma+2}\right)\gamma\lim_{k \rightarrow \infty}
 ||x_k||_{l^{2(\sigma+1)}}^{2(\sigma+1)}\nonumber\\
&=&\left( \frac{1}{2}-\frac{1}{2\sigma+2}\right)\gamma ||\tilde{x}||_{l^{2(\sigma+1)}}^{2(\sigma+1)}= J(\tilde{x}),\nonumber
\end{eqnarray}
so that $\tilde{x}$ is a minimizer on  $\mathcal{N}$.
Finally, in accordance with Theorem \ref{theorem:minimizer}, we rule out that the map $\Gamma_x$ possesses inflection points requiring $\Gamma^{\prime\prime}(1)=0$.
We obtain
\begin{eqnarray}
 \Gamma_x^\prime(s)&=&<J^\prime(sx),x>\nonumber\\
 &=&
- \sum_{n \in \mathbb{Z}^d} (s(\nabla x)_n)(\nabla x)_n
- \omega \sum_{n \in \mathbb{Z}^d} ({sx}_n) x_n-\gamma \sum_{n \in \mathbb{Z}^d}(sx_n)^{2\sigma+1}x_n-V_0(sx_0) x_0,\nonumber
\end{eqnarray}
and
\begin{eqnarray}
 \Gamma_x^{\prime \prime}(s)&=&<J^{\prime \prime}(sx),x>\nonumber\\
 &=& \gamma (1-(2\sigma+1) s^{2\sigma})\sum_{n \in \mathbb{Z}^d}(sx_n)^{2\sigma+1}x_n
 +V_0(1-s)x_0^{2},\nonumber
\end{eqnarray}
yielding  $\Gamma_{\tilde{x}}^{\prime \prime}(1)=-2\sigma\gamma \sum_{n \in \mathbb{Z}^d}x_n^{2\sigma+2}<0$. That is, due to Theorem \ref{theorem:minimizer}, $\tilde{x}$ is a critical point of $J$ and the proof for $V_0>0$ and $\gamma>0$ is complete.

\vspace*{0.5cm}

\noindent 2) Repulsive delta potential $V_0<0$.

Take  a minimizing sequence, $\{x_k\}\in \mathcal{N}$, i.e.
$\lim_{k\rightarrow \infty}J(x_k)=\inf_{x\in \mathcal{N}}J(x)$.
By Remark \ref{remark:bounded}, $\{x_k\}$ is bounded in in $l^2$, so that there exists a subsequence (not relabeled) that converges weakly in $l^2$, i.e. $x_k \rightharpoonup \tilde{x}$. The proof that $x_k$ converges strongly to $\tilde{x}$ in $l^2$ proceeds analogously to the one in case 1) $V_0>0$ above.
From
$-(\Delta \tilde{x},\tilde{x})_{l^2}-\omega \sum_{n \in \mathbb{Z}^d}    \tilde{x}_{n \in \mathbb{Z}^d}^2-\gamma (f(\tilde{x}),\tilde{x})_{X}+V_0\tilde{x}_0^{2}=0$ follows $\tilde{x} \in \mathcal{N}$.
In addition, $J(\tilde{x})=\lim_{k\rightarrow \infty} J(x_k)=\inf_{x\in \mathcal{N}}J(x)$. Hence, $\tilde{x}$ is a minimizer on  $\mathcal{N}$.

Finally, we show that $\Gamma^{\prime\prime}(1)=0$ establishing that the map $\Gamma_x$ does not have inflection points.
We have
\begin{eqnarray}
 \Gamma_x^\prime(s)&=&<J^\prime(sx),x>\nonumber\\
 &=&
 \sum_{n \in \mathbb{Z}^d} (s(\nabla x)_n)(\nabla x)_n
 - \omega \sum_{n \in \mathbb{Z}^d}  ({sx}_n) x_n-\gamma (f({sx}),{x})_{X}-V_0(sx_0) x_0,\nonumber
\end{eqnarray}
and
\begin{eqnarray}
 \Gamma_x^{\prime \prime}(s)&=&<J^{\prime \prime}(sx),x>\nonumber\\
 &=& \gamma (1-(2\sigma+1) s^{2\sigma})||x||_{l^{2(\sigma+1)}}^{2(\sigma+1)}+V_0(1-s)x_0^{2},
\end{eqnarray}
yielding  $\Gamma_{\tilde{x}}^{\prime \prime}(1)=-2\sigma\gamma ||\tilde{x}||_{l^{2(\sigma+1)}}^{2(\sigma+1)}=-2\sigma\gamma \sum_{n \in \mathbb{Z}^d} \tilde{x}_n^{2\sigma+2}<0$. That is, by  Theorem \ref{theorem:minimizer}, $\tilde{x}$ is a critical point of $J$ and the proof for $V_0<0$ and $\gamma>0$ is finished.
\end{proof}

\section{Excitation thresholds}\label{section:threshold}
In this section we investigate the impact of  nonlinearity on the durability of the (linear) defect modes and, reciprocally, how the delta potential affects the existence of (nonlinear) ground state solutions.

In the previous section we established  the existence of   stable ground state solutions  as the critical point in $l^2$ of the action  functional
\begin{equation}
 J(x)=-\frac{1}{2}(\Delta x,x)_{l^2}
 -\frac{\omega}{2}\sum_{n \in \mathbb{Z}^d} x_n^2-\frac{\gamma}{2\sigma+2} \sum_{n \in \mathbb{Z}^d} x_n^{2\sigma+2}-\frac{V_0}{2}\sum_{n \in \mathbb{Z}^d} \delta_{n,0}x_n^{2},\label{eq:action}
\end{equation}
where $\omega$ is treated as a parameter.
However, nothing can be stated about the value of the $l^2$ norm of the solutions.

The influence of the $l^2$ norm on the existence of  ground state solutions was studied in \cite{Weinstein}. The latter are determined by   the critical points (minimizers) of the energy functional $H(u)$ given in (\ref{eq:H}) with a prescribed $l^2$ norm (mass), i.e. solutions
 constrained to $l^2-$spheres $S=\left\{u\in C^1([0,\infty);l^2)\,:\,||u||_{l^2}^2=\nu \right\}$.

Before we proceed with our study of excitation thresholds,
  we prove the  equivalence between an action ({\bf M1}) respectively energy minimizing ({\bf M2}) ground state following the arguments in \cite{equ1},\cite{equ2}:
\begin{proposition}\label{proposition:equ}
Let $\omega(y)$ be the Lagrange multiplier associated with an arbitrary minimizer
$y\in S_\mu$ of (M2). Then,
for given
$\omega \in \{\omega(y)\,:\,y\in S_\nu\,\,{\text is\,\,a\,\,minimizer\,\,of\,\,(M1)}\}$,
any  ground state solution $x\in l^2$
of (M1) is  a  minimizer of (M2), that is,
\begin{equation}
 ||x||_{l^2}^2=\nu\,\,\,{\text and}\,\,\,E(x)=\underline{E}_\nu.
\end{equation}
\end{proposition}

\begin{proof}
The proof comprises three steps:

\underline{Claim 1:} For every $x\in l^2\setminus\{0\}$ there exists a unique $s(x)>0$ such that  $s(x)x\in \mathcal{N}$ and
\begin{equation}
 \max_{s>0}J(sx)=J(s(x)x).\label{eq:claim1}
\end{equation}

\vspace*{0.3cm}

Indeed,
for fixed $x\in l^2$ we set
\begin{eqnarray}
 (i)\,\,\,\Lambda_x(s)&=&\gamma\, s^{2\sigma}||x||_{l^{2(\sigma+1)}}^{2(\sigma+1)}\,\,\,\,{\rm if}\,\,V_0<0.\nonumber\\
(ii)\,\,\, \Lambda_x(s)&=&\gamma\, s^{2\sigma}||x||_{l^{2(\sigma+1)}}^{2(\sigma+1)}+V_0x_0^2\,\,\,\,{\rm if}\,\,V_0>0.\nonumber
\end{eqnarray}
For $\gamma>0$, $\Lambda_x(s)$ is strictly monotonically growing for $0<s<\infty$. As noted in Section \ref{section:breathers}, $sx\in \mathcal{N}$ if and only if
\begin{equation}
 \Lambda_x(s)-||\nabla x||_{l^2}^2+\omega ||x||_{l^2}^{2}+V_0x_0^{2}=0.\label{eq:Leq}
\end{equation}
(i) As $\omega<0$- and $V_0<0$, we infer that there exists a unique $s(x)>0$ for which (\ref{eq:Leq}) holds. Consequently,
\begin{equation}
 \Gamma_x^\prime(s(x))=s(x)\left(\Lambda(s(x))-||\nabla x||_{l^2}^2+\omega ||x||_{l^2}^{2}+V_0x_0\right)=0,\nonumber
\end{equation}
that is, $s(x)x\in \mathcal{N}$. From Section we know that $\Gamma(s)$ is strictly increasing  for $0<s<s(x)$ and strictly decreasing for $s(x)<s<\infty$.
The case (ii) is treated analogously.
Hence, claim  (\ref{eq:claim1}) is valid.

\vspace*{0.5cm}

\underline{Claim 2:} For every $\nu>0$ and $x\in l^2$, it holds,
\begin{equation}
 J(x)\ge \underline{E}_\nu-\frac{1}{2}\omega \nu.\label{eq:claim2}
\end{equation}
The equality is true if and only if $x$ is a minimizer of {\bf M1}, and $x$ is a ground state solution of {\bf M2}.

\vspace*{0.3cm}

Indeed,
as $x\in \mathcal{N}$, from the previous step  we have,
\begin{equation}
 J(x)= \max_{s>0}J(sx)\ge J(sx),
\end{equation}
and $J(x)=J(sx)$ if and only if $s=1$.
Further, we have
\begin{equation}
 J(x)\ge J\left(\frac{\nu^{1/2}}{||x||_{l^2}}\,x \right)=E\left(\frac{\nu^{1/2}}{||x||_{l^2}}\,x \right)-\frac{1}{2}\omega \nu\ge \underline{E}_\nu-\frac{1}{2}\omega \nu.\label{eq:Jineq}
\end{equation}
Hence,  relation (\ref{eq:claim2}) is true. If, moreover, the equality is true, then due to (\ref{eq:Jineq}), it holds
\begin{equation}
 E\left(\frac{\nu^{1/2}}{||x||_{l^2}}\,x \right)=\underline{E}_\nu,
\end{equation}
and
\begin{equation}
 J(x)=J\left(\frac{\nu^{1/2}}{||x||_{l^2}}\,x \right).
\end{equation}
By claim 1, this implies $||x||_{l^2}^2=\nu$, and hence, $E(x)=\underline{E}_\nu$.
That is, $x$ is a minimizer of {\bf M1}.
By (\ref{eq:claim2}), for every $x\in \mathcal{N}$, we also have,
\begin{equation}
 J(x)\ge \underline{E}_\nu-\frac{1}{2}\omega \nu=E(x)-\frac{1}{2}\omega ||x||_{l^2}^2=J(x),
\end{equation}
implying that $x$ is a ground state of {\bf M2}. Moreover, if $x$ is a minimizer of {\bf M1}, then
\begin{equation}
 J(x)=E-\frac{1}{2}\omega ||x||_{l^2}^2=\underline{E}_\nu-\frac{1}{2}\omega ||x||_{l^2}^2,
\end{equation}
so that the equality holds and claim 2 holds true.

For the final step of the proof,  for given
$\omega \in \{\omega(y)\,:\,y\in S_\nu\,\,{\text is\,\,a\,\,minimizer\,\,of\,\,{\bf M1}}\}$, let $y\in l^2$ be any ground state solution of {\bf M2}. Then,
\begin{equation}
 J(y)\le \underline{E}_\nu-\frac{1}{2}\omega\mu.
\end{equation}
giving in conjunction with (\ref{eq:claim2}) $J(y)=\underline{E}_\nu-(\omega/2) \nu$. Finally,
by claim 2 it follows that $y$ is a minimizer of {\bf M1} finishing the proof.
\end{proof}

%

 \begin{lemma}\label{lemma:Eminus}
 For every $\nu>0$ holds
 \begin{equation}
  \inf_{x\in S_\nu}E(x)=\underline{E}_\nu<0.\nonumber
 \end{equation}
 \end{lemma}

 \begin{proof}
 For $x \in S_\nu$ we take $sx$ with $s>0$ and
 \begin{eqnarray}
  E(sx)&=&\frac{1}{2}\sum_{n}\left((sx_{n+1}-sx_n)^2-\frac{\gamma}
  {\sigma+1} (sx_n)^{2(\sigma+1)}-V_0(sx_0)^2\right)\nonumber\\
   &=&\frac{s^2}{2}\sum_{n}\left((x_{n+1}-x_n)^2-\frac{s^{2\sigma}}
   {\sigma+1} x_n^{2(\sigma+1)}-V_0 x_0^2\right)\nonumber\\
  &=&s^2 M_{x,1}-s^{2(\sigma+1)}M_{x,2}:=f(s).\nonumber
 \end{eqnarray}
 As for $s$ sufficiently large, the term $s^{2(\sigma+1)}$ dominates it follows $E<0$. If $\inf_{x\in S_\nu}E(x)=\underline{E}_\nu$ exists, then $\underline{E}_\nu<0$.
 Furthermore, we bound $E$ from below as follows
 \begin{eqnarray}
 E&\ge& -\frac{\gamma}{2(\sigma+1)}||x||_{l^{2\sigma+2}}^{2\sigma+2}-\frac{V_0}{2}||x||_{l^2}^2 \ge -\frac{\gamma}{2(\sigma+1)}||x||_{l^{2}}^{2\sigma+2}-\frac{V_0}{2}||x||_{l^2}^2\nonumber\\
 &\ge& -\left(\frac{\gamma}{2(\sigma+1)}\nu^{\sigma}+\frac{V_0}{2}\right)\nu>-\infty.\nonumber
 \end{eqnarray}
Since $E$ is bounded below, it attains its infimum $\underline{E}_\nu<0$ in $S_\nu$.
 \end{proof}

\vspace*{0.5cm}

Since it is proven that a ground state of problem {\bf M1} is also a ground state of problem {\bf M2}, and due to the energy equivalence  in relation (\ref{eq:varprob}), we apply
Weinstein's criterion  \cite{Weinstein}, valid for  the solution of problem  {\bf M2}, to investigate the existence of excitation thresholds for the formation of ground states. In detail, if a ground state (as a minimizer of $\inf_{x\in S_\nu}E(x)<0$) exists for any value of $\nu>0$, there is no excitation threshold. Conversely, if there exists a positive $\nu_{exc}$ such that $\inf_{x\in S_\nu}E(x)<0$ if and only if $\nu>\nu_{exc}$, then $\nu_{exc}$ constitutes an excitation threshold.

In what follows,  we assume that $x$ is represented by exponentially decaying square-summable sequences for the following reasons:

1) As seen in  Section \ref{section:linear}, in the linear limiting case, $\gamma=0$, the defect modes are exponentially localized at a single site (without loss of generality the origin  of the lattice). Moreover, these defect modes are preserved for (at least) weak nonlinearity.

2) With concern to the (exponential) localization of ground states of the nonlinear system with $\gamma > 0$, we exploit that for a self-adjoint operator $A$ on a Hilbert space $X$ and a compact operator $K$ on $X$, one has that  $\sigma(A+K)\setminus \sigma(A)$ consists of isolated eigenvalues of finite multiplicity \cite{Douglas}.
We present a consice proof of exponential localization of the solutions:
\begin{proposition}\label{proposition:expdecay}
  Solutions $x\in l^2(\mathbb{Z}^d)$ to
\begin{equation}
\omega x_n+(\Delta x)_n+\gamma x_n^{2\sigma} x_n=0,\,\,\,n\in \mathbb{Z}^d,\label{eq:breathers}
\end{equation}
 satisfy
 \begin{equation}
  |x_n|\le C\exp(-\eta |n|),\,\,\,n\in \mathbb{Z}^d,\nonumber
 \end{equation}
 with some constants $C,\eta>0$.
\end{proposition}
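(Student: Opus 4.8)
The plan is to prove exponential decay by a discrete comparison (barrier) argument built on the spectral gap that $\omega<0$ creates. First I would rewrite (\ref{eq:breathers}) in the diagonally dominant form
\[
(2d-\omega)\,x_n=\sum_{j}\left(x_{n+j}+x_{n-j}\right)+\gamma\, x_n^{2\sigma+1},
\]
where $j$ runs over the $d$ lattice directions. Since $x\in l^2(\mathbb{Z}^d)$ we have $x_n\to 0$ as $|n|\to\infty$; in particular the nonlinear coefficient $\gamma|x_n|^{2\sigma}$ tends to $0$. Because $\omega<0$ lies below the spectrum $\sigma(\Delta)=[-4d,0]$, the diagonal weight satisfies $2d-\omega=2d+|\omega|>2d$, i.e. it strictly dominates the off-diagonal mass $2d$ by the gap $|\omega|$. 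I encode this through the operator $(\mathcal{M}a)_n:=(2d-\omega)a_n-\sum_{j}(a_{n+j}+a_{n-j})$, for which taking absolute values in the displayed equation shows that $a_n:=|x_n|$ is a subsolution, $(\mathcal{M}a)_n\le\gamma\,a_n^{2\sigma}a_n$.

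Next I would construct an exponential barrier $b_n=C e^{-\eta|n|}$ and show it is a supersolution. Using $|n\pm j|\in\{|n|-1,|n|+1\}$, a direct computation gives $(\mathcal{M}b)_n\ge \lambda\, b_n$ with $\lambda:=|\omega|-2d(\cosh\eta-1)$, the worst case being a site with all coordinates nonzero. For every sufficiently small $\eta>0$ one has $0<\lambda<|\omega|$; the strict inequality $\lambda<|\omega|$ (valid for all $\eta>0$) is exactly what the argument exploits. I would then fix such an $\eta$, choose $R$ so large that $\gamma a_n^{2\sigma}\le\lambda$ for all $|n|\ge R$ (possible since $a_n\to0$ and $\lambda>0$), and choose $C$ large enough that $b_n\ge a_n$ on the finite set $\{|n|\le R\}$.

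The conclusion follows from a discrete maximum principle applied to $w:=a-b$. Both $a$ and $b$ tend to $0$, so if $\sup_n w>0$ it is attained at some site $n^{*}$, which must lie in $\{|n|>R\}$ by the choice of $C$. At such an interior positive maximum $M=w_{n^{*}}>0$, diagonal dominance gives $(\mathcal{M}w)_{n^{*}}\ge(2d-\omega)M-2dM=|\omega|M$, while combining the sub/supersolution bounds with $a_{n^{*}}=M+b_{n^{*}}$ and $\gamma a_{n^{*}}^{2\sigma}\le\lambda$ gives $(\mathcal{M}w)_{n^{*}}\le \gamma a_{n^{*}}^{2\sigma}a_{n^{*}}-\lambda b_{n^{*}}\le\lambda a_{n^{*}}-\lambda b_{n^{*}}=\lambda M$. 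Hence $|\omega|M\le\lambda M$, contradicting $\lambda<|\omega|$. Therefore $w\le 0$, that is $|x_n|\le C e^{-\eta|n|}$, as claimed.

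The main obstacle is controlling the superlinear term at the hypothetical interior maximum; it is handled by pushing the comparison into the far region $\{|n|\ge R\}$, where $\gamma|x_n|^{2\sigma}\le\lambda$ so the nonlinear contribution is dominated by the linear weight $\lambda$, and by noting that the exponential barrier costs only $2d(\cosh\eta-1)<|\omega|$ of the available gap. An alternative route would be a Combes--Thomas estimate showing that the resolvent $(\Delta+\omega)^{-1}$ has an exponentially decaying kernel, followed by writing $x=-\gamma(\Delta+\omega)^{-1}(|x|^{2\sigma}x)$ and bootstrapping the decay; the barrier argument above is, however, more elementary and entirely self-contained.
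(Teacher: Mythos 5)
Your proof is correct, and it takes a genuinely different route from the paper's. The paper argues operator-theoretically: it rewrites the stationary equation as the eigenvalue problem $-(\Delta+M)x=\omega x$ for the multiplication operator $(Mx)_n=x_n^{2\sigma}x_n$, notes that $M$ is compact on $l^2$ because $x_n\to 0$ at infinity, invokes Weyl's theorem to conclude that a spectral parameter outside $\sigma(-\Delta)=[0,4d]$ is an isolated eigenvalue of finite multiplicity of the perturbed operator, and then quotes the exponential decay of eigenfunctions attached to isolated eigenvalues (a Combes--Thomas-type result, cited from the Jacobi-operator literature) --- essentially the ``alternative route'' you sketch in your closing sentence. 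Your barrier/maximum-principle argument is more elementary and entirely self-contained: it uses no compactness, no Weyl theorem and no external decay theorem, it yields an explicit admissible rate (any $\eta>0$ with $2d(\cosh\eta-1)<|\omega|$), and it works directly on $\mathbb{Z}^d$, whereas the decay result the paper cites is stated for one-dimensional Jacobi operators, so your argument in fact supplies a detail that the paper delegates to a reference. The price is that your proof, as written, is tied to the gap below the spectrum: you use $\omega<0$ explicitly (the standing assumption for ground states in the paper, while the paper's own proof only needs the gap condition $\omega\notin[0,4d]$); the complementary case $\omega>4d$, relevant for staggering states, would require the substitution $x_n\mapsto(-1)^{|n|}x_n$, after which your argument applies verbatim with $|\gamma|$ in place of $\gamma$. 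Two cosmetic points: write $|\gamma|$ in the subsolution inequality $(\mathcal{M}a)_n\le|\gamma|\,a_n^{2\sigma}a_n$ so that the sign of $\gamma$ is immaterial, and keep the remark (which you do make) that attainment of $\sup_n w$ relies on $w_n\to 0$ at infinity, since this is what turns the comparison into a rigorous maximum principle on the unbounded lattice.
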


\begin{proof}
We express (\ref{eq:breathers}) in the form
\begin{equation}
 -(\Delta+M)x_n=\omega x_n,\label{eq:eigenvalue}
\end{equation}
where the multiplication operator $M\,:\,l^2\mapsto l^2$ is determined by
\begin{equation}
Mx_n =x_n^{2\sigma}\cdot x_n.\nonumber
\end{equation}
Since $\lim_{|n|\rightarrow \infty}x_n^{2\sigma}=0$, the multiplication operator $M$ is compact on $l^2$.
As $\omega \notin \sigma(\Delta)$ implies $\omega \in \sigma(\Delta +M)\setminus \sigma(\Delta)$, it follows that $\omega$ is an eigenvalue of finite multiplicity of the operator $\Delta+M$. The associated eigenfunction exhibits exponential decay (see e.g. in \cite{Jacobi}) and the proof is finished.
\end{proof}

%
%
%

\vspace*{0.5cm}

 With the proven exponential localization of ground states, 
 $(x_n)_{n\in \mathbb{Z}^d} \in l^2$ can be represented as    $(x_n)_{n\in \mathbb{Z}^d}=(A\exp(-\alpha |n|))_{n\in \mathbb{Z}^d}$ with $\alpha>\beta$. (We recall that in systems with dispersive interaction $\Delta x$ their solutions cannot decay faster than exponential.)
 
Using $\alpha=-\log(\eta)$ with $0<\eta <1$, we express $x_n=A\exp(-\alpha |n|)$ as 
 \begin{equation}
  x_n=A\eta^{|n|},\,\,\,n \in \mathbb{Z}^d,\label{eq:trial}
 \end{equation}
 with $|n|=|n_1|+...+|n_d|$ and amplitude $A>0$ determined by 
 \begin{equation}
  A=\left(\left(\frac{1-\eta^2}{1+\eta^2}\right)^d\,\nu \right)^{1/2}.\nonumber
 \end{equation}
Note that $\eta \rightarrow 0^+$ leads to enhanced localization towards a single-site state  while $\eta \rightarrow 1^-$ renders the profile wider (advancing towards  a flat state of zero amplitude), i.e. varying $\eta$ between zero and one interpolates between two nearly extreme states, namely a single site state and a wide (yet weakly localized)  absolutely continuous spectrum near edge state.

The energy $E(x)$ expressed in terms of
\begin{equation}
 x_n=\left(\left(\frac{1-\eta^2}{1+\eta^2}\right)^d\,\nu \right)^{1/2}\eta ^{|n|},\nonumber
\end{equation}
reads
\begin{equation}
 E=\left(\frac{(1+\eta^2)^{d}-(2\eta)^d}{(1+\eta^{2})^d}-\frac{\gamma}{\sigma+1} \left(\left(\frac{1-\eta^2}{1+\eta^2}\right)^d \right)^{\sigma+1} \left(\frac{1+\eta^{2\sigma+2}}{(1-\eta)F(\eta,\sigma)}\right)^d\nu ^\sigma-V_0\left(\frac{1-\eta^2}{1+\eta^2}\right)^d\right)\,\nu,\label{eq:Hnu}
\end{equation}
where $F(\eta,\sigma)=\sum_{l=1}^{2\sigma+1}\eta^l+1$.

Subsequently we
 investigate  for which value(s) of the mass $\nu=||x||_{l^2}^2$ a  ground state solution in the form of a  breather respectively impurity mode exists at all.

 In this context we use the  criterion
  $\underline{E}_{\nu}\le E<0$ that is necessary and sufficient for a state to be localized. The energy $E=E(\nu;\eta,\gamma,\sigma,V_0)$ is determined by (\ref{eq:Hnu}).

We begin  with the effect of the point defect on the existence of excitation thresholds 
for the formation of ground state breather solutions for $\gamma>0$.

\vspace*{0.5cm}

\noindent{\it Proof of Theorem \ref{theorem:threshold}}

\noindent 1) Attractive delta potential $V_0>0$.

As noted in section \ref{section:linear},  for $V_0>0$  the linear lattice with delta potential possesses a bound state (defect mode) of negative energy generated 
by the presence of the point defect. 

With the addition of a focusing nonlinear term to the linear equation, its associated contribution to the energy is
negative so that the total energy remains negative for all $\nu>0$. Hence, for  focusing nonlinearity $\gamma>0$ and attractive delta potential $V_0>0$ the existence of an excitation threshold for the creation of breathers is ruled out.

\vspace*{0.5cm}

\noindent 2) Repulsive delta potential $V_0<0$.

From (\ref{eq:Hnu}) we deduce that $E\ge0$ if and only if
the right-hand side  of  (\ref{eq:Hnu}) is positive yielding the condition
 \begin{eqnarray}
\frac{\sigma+1}{\gamma} \left[2\frac{(1+\eta^2)^{d}-(2\eta)^d}{(1+\eta^{2})^d}-V_0\left(\frac{1-\eta^2}{1+\eta^2}\right)^d\right] \left(\left(\frac{1+\eta^2}{1-\eta^2}\right)^d \right)^{\sigma+1} \left(\frac{F(\eta,\sigma)(1-\eta)}{1+\eta^{2\sigma+2}}\right)^d&\ge&\nu^{\sigma}.
\nonumber
 \end{eqnarray}
This  implies  an upper bound of  $\nu$ as follows
\begin{eqnarray}
&&\frac{\sigma+1}{\gamma}\inf_{0<\eta <1}\left(\frac{1}{(1+\eta^2)^d}\left[2\left((1+\eta^2)^{d}-(2\eta)^d \right)- V_0(1-\eta^2)^d\right]
\frac{(1+\eta^2)^{\sigma d}}{(1-\eta^2)^{(\sigma+1)d}}\right.\nonumber\\
&\times&\left.\left(\frac{F(\eta,\sigma)(1-\eta)}{1+\eta^{2\sigma+2}}\right)^d\right)\ge \sup_{\nu\ge 0}\nu^{\sigma}.
\label{eq:inf1}
\end{eqnarray}
The inequality (\ref{eq:inf1}) is of the form
\begin{eqnarray}
 \nu^{\sigma}&\le& \frac{\sigma+1}{\gamma}\inf_{0<\eta <1 }\left(\frac{1}{(1+\eta^2)^d}\left[2F_1(\eta,d)(1-\eta)^2 -V_0(1+\eta)^d(1-\eta)^d\right]\right.\nonumber\\
 &\times&\left.
\frac{(1+\eta^2)^{\sigma d}}{(1+\eta)^{(\sigma+1)d}(1-\eta)^{(\sigma+1)d}}\left(\frac{F(\eta,\sigma)(1-\eta)}{1+\eta^{2\sigma+2}}\right)^d\right),\label{eq:psigma}
 \end{eqnarray}
where $F_1(\eta,d)(1-\eta)^2=(1+\eta^2)^{d}-(2\eta)^d$ is  explicitly given for $d=1,...,4$ by   
$F_1(\eta,1)=1$, $F_1(\eta,2)=(1+\eta)^2$, $F_1(\eta,3)=\eta^4+2\eta^3+6\eta^2+2\eta+1$, $F_1(\eta,4)=\eta^6+2\eta^5+7\eta^4+12\eta^3+7\eta^2+2\eta+1$.

\vspace*{0.5cm}

(i) $\sigma <\min\{1,2/d\}$.
The infimum on the right-hand side of (\ref{eq:psigma}) is attained for $\eta \rightarrow 1^-$.
 Setting $1-\eta=\epsilon$,  we obtain
\begin{eqnarray}
 \nu^{\sigma}&\le& \frac{\sigma+1}{\gamma}\lim_{\epsilon \rightarrow 0^+}\left(\frac{(1+\eta^2)^{\sigma d}}{(1+\eta)^{(\sigma+1)d}}\frac{1}{(1+\eta^2)^d}\left(\frac{F(\eta,\sigma)}{1+\eta^{2\sigma+2}}\right)^d\right)\nonumber\\
 &\times&\left[2F_1(\eta,d)\epsilon^{2-\sigma d} -V_0(1+\eta)^d \epsilon^{d(1-\sigma)}\right]=0.\nonumber
\end{eqnarray}
We conclude, only for $\nu=0$ it holds $E\ge 0$. That is, for $\sigma <\min\{1,2/d\}$
one has $\underline{E}_\nu\le E<0$ for all $\nu>0$ so that  there is no excitation threshold. Hence, ground states  of arbitrarily small amplitude that bifurcate off the linear zero solution exist.

\vspace*{0.5cm}

(ii) $\sigma\ge \max\{1,2/d\}$. The infimum  on the right-hand side of (\ref{eq:psigma}) is attained for $\eta \rightarrow 0^+$
yielding
\begin{eqnarray}
 \nu^{\sigma}&\le& \frac{\sigma+1}{\gamma}\lim_{\eta \rightarrow 0^+}\left(\frac{1}{(1+\eta^2)^d}\left[2F_1(\eta,d)(1-\eta)^2 -V_0(1+\eta)^d(1-\eta)^d\right]\right.\nonumber\\
&\times&\left.\frac{(1+\eta^2)^{\sigma d}}{(1+\eta)^{(\sigma+1)d}(1-\eta)^{(\sigma+1)d}}\left(\frac{F(\eta,\sigma)(1-\eta)}{1+\eta^{2\sigma+2}}\right)^d\right)=\frac{\sigma+1}{\gamma}\left(2-V_0\right).\nonumber
\end{eqnarray}
Thus, $E\ge 0$ if and only if
\begin{equation}
0\le\nu \le \nu_{1}=\left(\frac{(\sigma+1)(2-V_0)}{\gamma}\right)^{1/\sigma}.
\end{equation}

Conversely,  $\underline{E}_\nu \le E<0$ if and only if
\begin{equation}
\nu > \nu_{1}=\left(\frac{(\sigma+1)(2-V_0)}{\gamma}\right)^{1/\sigma}>0.\label{eq:uppernu}
\end{equation}
Notice that condition (\ref{eq:uppernu}) is the condition that   a state becomes localized   at all because $\nu_1$ gives the threshold value that the mass $\nu$ of a state  ($\eta \sim 0^+$) must exceed in order to be (at least) weakly localized ($\eta \sim 0^+$)  with its   energy $E(\eta\sim 0^+)$  (just) below zero.

Writing (\ref{eq:Hnu}) as
\begin{equation}
 E(\nu;\eta,d,\sigma)=\left(A_{\eta,d} -B_{\eta,d,\sigma} \nu^\sigma\right)\nu,\nonumber
\end{equation}
where
\begin{eqnarray}
 A_{\eta,d}&=&2\frac{(1+\eta^2)^{d}-(2\eta)^d}{(1+\eta^{2})^d}-V_0\left(\frac{1-\eta^2}{1+\eta^2}\right)^d>0,\nonumber\\
 B_{\eta,d,\sigma}&=&\frac{\gamma}{\sigma+1} \left(\left(\frac{1-\eta^2}{1+\eta^2}\right)^d \right)^{\sigma+1} \left(\frac{1+\eta^{2\sigma+2}}{(1-\eta)F(\eta,\sigma)}\right)^d>0,\nonumber
\end{eqnarray}
one notices that $\nu_1$ is the unique nontrivial zero of $E(\nu;0^+,d,\sigma)$. That is, $E(\nu;0^+,d,\sigma)\ge 0$ for $\nu\le \nu_1$ and $E(\nu;0^+,d,\sigma)<0$ for $\nu>\nu_1$. Thus, $\nu_1=\nu_{exc}$.

In conclusion, $\underline{H}_\nu=2\underline{E}_\nu<0$ if and only if $\nu > \nu_{exc}>0$, i.e. there is an excitation threshold for the existence of ground states finishing the proof of Theorem \ref{theorem:threshold}.

\vspace*{0.5cm}

To gain further insight into the localization features of the DNLS (\ref{eq:system2}), we note that  when $\eta \rightarrow 0^+$ the excitation pattern approaches the single-site
state and   the  gap between the energy of the linear system, 
\begin{equation}
E_{0}=\frac{(1-\eta)^2}{1+\eta^2}\nu,\nonumber
\end{equation}
and that of the nonlinear contribution (responsible for creating the potential well which makes possible at all localization), 
\begin{equation}
E_1=- \frac{\gamma}{2\sigma+2} \left(\frac{1-\eta^2}{1+\eta^2}\,\nu \right)^{\sigma+1} \frac{1+\eta^{2\sigma+2}}{1-\eta^{2\sigma+2}},\nonumber
\end{equation}
becomes maximal. In order to ease the   illustration we treat here the case $d=1$. 
We obtain
\begin{equation}
 \lim_{\eta \rightarrow 0^+}\left(E_0(\eta)+E_1(\eta)\right)=\nu\left(1-\frac{\gamma}{2\sigma+2}\nu^\sigma\right)\ge 0 \Longleftrightarrow \nu\le \left(\frac{2\sigma+2}{\gamma}\right)^{1/\sigma}
\end{equation}
Substituting the threshold value $\nu_{exc}= (2\sigma+2)/\gamma)^{1/\sigma}$ into the expression for the energy $E=E_0+E_1$, it is readily seen that for $\sigma \ge 2$
\begin{equation}
 E=\left(1-\frac{(1+\eta)^{\sigma+1}(1+\eta^{2\sigma+2}}{(1+\eta^2)^{\sigma+1}}   (1-\eta)^{\sigma-1}\right)\frac{(1-\eta)^2}{1+\eta^2}\nu\ge 0,\qquad \forall \eta \in (0,1).\nonumber
\end{equation}
Conclusively, $E<0$ if $\nu>\nu_{exc}=(2\sigma+2)/\gamma)^{1/\sigma}$.\nonumber

\  \  $\Box$

\vspace*{0.5cm}

We also discuss the localization behavior when $\nu\rightarrow \infty$.
From the relation
\begin{equation}
 (1+\eta^2)^d A^2=(1-\eta^2)^d\nu,\nonumber
\end{equation}
we deduce that if $\eta \rightarrow 1^-$ then $A\rightarrow 0^+$ complying with the fact that low amplitude breathers bifurcate off  the linear trivial solution. Conversely, if $\eta\rightarrow 0^+$ then  $A^2\rightarrow \nu$,  
in accordance with the result in \cite{Weinstein} proving that as $\nu$ increases, the ground states grow in amplitude and become increasingly concentrated about a single lattice site. 

\vspace*{0.5cm}

We conclude this section with an  investigation of  the persistence of the linear
impurity mode in the presence of defocusing nonlinearity,  i.e. $\gamma<0$ and $V_0\neq 0$. 

\vspace*{0.5cm}

\noindent 1) Defocusing nonlinearity $\gamma<0$ and attractive delta potential $V_0>0$.

\begin{equation}
 E=2\nu\frac{(1+\eta^2)^{d}-(2\eta)^d}{(1+\eta^{2})^d}-\frac{\gamma}{\sigma+1} \left(\left(\frac{1-\eta^2}{1+\eta^2}\right)^d\,\nu \right)^{\sigma+1} \left(\frac{1+\eta^{2\sigma+2}}{(1-\eta)F(\eta,\sigma)}\right)^d-V_0\left(\frac{1-\eta^2}{1+\eta^2}\right)^d\,\nu,\nonumber
\end{equation}

$E\le 0$  if
\begin{eqnarray}
\nu^{\sigma}&\le&\frac{\sigma+1}{\gamma}\inf_{0<\eta <1}\,\left(V_0\left(\frac{1+\eta}{1+\eta^2}\right)^d (1-\eta)^{d(1-\sigma)}- 2\frac{F_1(\eta,d)}{1+\eta^{2}}(1-\eta)^{2-d\sigma}\right)\nonumber\\
&\times&\left(1+\eta^2\right)^{d(\sigma+1)} \left(\frac{F(\eta,\sigma)}{1+\eta^{2\sigma+2}}\right)^d.
\label{eq:infgamma}
\end{eqnarray}
If $\sigma<\min\{1,2/d\}$, then the infimum on the right-hand side of (\ref{eq:infgamma})
is attained for $\eta\rightarrow 1^-$ yielding $\nu\le 0$, 
so that we deduce $E> 0$ for all  $\nu> 0$. That is, for $\sigma <1$ a localized state cannot exist.
However, if $\sigma \ge \max\{1,2/d\}$,  the 
  infimum on the right-hand side of (\ref{eq:infgamma})   is attained for 
  $\eta \rightarrow 0^+$ yielding the {\it upper threshold} 
\begin{equation}
0<\nu< \overline{\nu}_{exc}\le \frac{\sigma+1}{\gamma} \left(V_0-2\right),\,\,\,V_0>2.\nonumber
\end{equation}
  Hence, the (linear) impurity mode only sustains defocusing nonlinearity 
  for subcritical mass $\nu<\overline{\nu}_{exc}$.

\vspace*{0.5cm}  
  \setcounter{equation}{9}
\noindent 2) Focusing nonlinearity $\gamma>0$ and repulsive delta potential $V_0<0$.

The existence of the staggering defect mode requires $E>4d$, which entails the inequality
\begin{eqnarray}
\nu^{\sigma+1}&<&\frac{\sigma+1}{\gamma}\inf_{-1<\eta <0}\,\left(2\frac{F_1(\eta,d)}{1+\eta^{2}}(1-\eta)^{2-d\sigma}\nu-V_0\left(\frac{1+\eta}{1+\eta^2}\right)^d (1-\eta)^{d(1-\sigma)}\nu\right.\nonumber\\
&-&\left. 4d(1-\eta)^{-d\sigma}\right)\left(1+\eta^2\right)^{d(\sigma+1)}\left(\frac{F(\eta,\sigma)}{1+\eta^{2\sigma+2}}\right)^d\label{eq:survstagg}
\end{eqnarray}
must be satisfied so  the 
  staggering defect mode survives under the impact of focusing nonlinearity. For $\sigma \ge \max\{1,2/d\}$ the infimum on the right-hand side of (\ref{eq:survstagg}) is attained for $\eta \rightarrow 0^-$  resulting in
\begin{equation}
 0<\nu^{\sigma+1}<\frac{\sigma+1}{\gamma}\left((2-V_0)\nu-4d\right).\nonumber
\end{equation}
  In contrast, 
  for $\sigma < \min\{1,2/d\}$ one has $E\le 4d$ for all $\nu^{\sigma+1}\ge ((2-V_0)\nu-4d)(\sigma+1)/\gamma$.
  Thus, then no staggering defect mode exists.
  
\vspace*{0.5cm}

\begin{remark}\label{remark:simultaneous}
For $\gamma>0$ and $V_0<0$ the simultaneous existence of localized states below (breathers) as well as above (staggering defect mode) the absolutely continuous spectrum of $\Delta$ is possible. 
This is an effect of discreteness and it is not present in the continuum NLS with point defect because the operator $-\partial^2_x+V_0\delta(x)$, $V_0>0$, possesses a purely absolutely continuous spectrum  equal to $[0,\infty)$ and no eigenvalue (point spectrum). 
\end{remark}

 \vspace*{0.5cm}

\section{Asymptotic behavior}\label{section:scattering}
In this section we discuss the asymptotic features of the $\delta$DNLS 
\begin{equation}
i\frac{d u_n}{dt}+ (\Delta_\delta u)_n+\gamma|u_n|^{2\sigma}u_n=0,\,\,\,n\in \mathbb{Z}^d.\nonumber
\end{equation}

\vspace*{0.5cm}
We begin with the case $V_0=0$.

{\it Scattering:}
Concerning the asymptotic properties
we consider the linear Schr\"odinger group $\exp(i t \Delta)$.
In the regime where no nonlinear localized state exists, that is for  $V_0<0$, $\sigma\ge \max\{1,2/d\}$ and $V_0=0$, $\sigma \ge 2/d$, respectively, and $l^2$ norm below $\nu_{exc}$, we establish that the solutions
 scatter to a  solution of the linear problem in $l^p$, $p>2$. That is,
the solutions of the nonlinear problem exhibit asymptotically free behavior.
A solution is said to scatter in the positive (negative) time direction if there exists $v^{\pm }  \in l^p$
such that
\begin{equation}
 \left\|u(t)-\exp(i\Delta t)v^{\pm }  \right\|_{l^p}\longrightarrow 0\,\,\,{\rm as}\,\,\,t\rightarrow \pm \infty.\nonumber
\end{equation}
The solution scatters if it scatters in both time directions.

\vspace*{0.5cm}

\noindent {\it Proof of Theorem \ref{theorem:scatterin1}}
 We begin with the treatment of scattering in the positive time direction. 
 For the Cauchy problem  with initial datum $u_0\in l^2$ we consider the 
 global solution $u(t)$ and introduce the asymptotic state $v^+$
 \begin{equation}
  v^+=u_0+i\gamma \int_0^\infty\,\exp(-i\Delta s) |u(s)|^{2\sigma}u(s) ds.\label{eq:int1}
 \end{equation}
Application of the operator $\exp(i\Delta t)$ on either side of
(\ref{eq:int1}) yields
\begin{eqnarray}
 \exp(i\Delta t) v^+&=& \exp(i\Delta t)u_0+i \gamma  \int_0^\infty\,\exp(i\Delta(t- s))|u(s)|^{2\sigma}u(s)ds\nonumber\\
 &=& u(t)+i \gamma \int_t^\infty\,\exp(i\Delta (t- s))|u(s)|^{2\sigma}u(s) ds.\nonumber
\end{eqnarray}
Using the  estimate (5.2) 
we derive
\begin{eqnarray}
 \left\|\exp(i\Delta t) v^+-u(t) \right\|_{l^p}&=& \gamma \left\|\int_t^\infty\,\exp(i\Delta (t- s))  |u(s)|^{2\sigma}u(s)ds \right\|_{l^p}\nonumber\\
 &\le&  \gamma\int_0^\infty\,\left\|\exp(i\Delta (t- s))  |u(s)|^{2\sigma}u(s)\right\|_{l^{p}}ds\nonumber\\
 &\le&  C\int_0^\infty\,\frac{1}{<t-s>^{d(p-2)/(3p)}}
\gamma  \left\| |u(s)|\right\|_{l^{(2\sigma+1)p^\prime}}^{2\sigma +1}ds,\nonumber
\end{eqnarray}
and we used the Strichartz estimate \cite{Stefanov}
\begin {equation}
||\exp(i \Delta t )u(0)||_{l^p}\le C<t>^{-d(p-2)/(3p)}||u(0)||_{l^{p^\prime}},\qquad 2\le p\le \infty,\nonumber
\end {equation}
arising from an interpolation between
\begin{equation}
 ||u(t)||_{l^2}=||u(0)||_{l^2},\nonumber
\end{equation}
and
\begin{equation}
 ||u(t)||_{l^\infty}=C<t>^{-d/3}||u(0)||_{l^1},\nonumber
\end{equation}
where $<t>=1+|t|$.
By assumption $2(\sigma+1)\ge p$ one has $(2\sigma+1)p^\prime \ge p$. Then due to the continuous embeddings $l^{(2\sigma+1)p^\prime} \subseteq l^p$ we have
\begin{eqnarray}
\left\|\exp(i\Delta t) v^+-u(t) \right\|_{l^p}&\le&  C  \int_0^\infty\,\frac{1}{<t-s>^{d(p-2)/(3p)}}\,
\gamma \left\| u(s)\right\|_{l^{p}}^{2\sigma+1}ds\nonumber\\
&\le&C_1  \int_0^\infty\,\frac{1}{<t-s>^{d(p-2)/(3p)}}\,
\frac{\gamma \left\| u(0)\right\|_{l^{p^\prime}}^{2\sigma+1}}{s^{d(p-2)(2\sigma+1)/(3p)}}ds\nonumber\\
&\le&C_1 \gamma \left\| u(0)\right\|_{l^{p^\prime}}^{2\sigma+1} \int_0^\infty\,\frac{1}{<t-s>^{d(p-2)/(3p)}}\,
\frac{1}{<s>^{d(p-2)(2\sigma+1)/(3p)}}ds.\nonumber
\end{eqnarray}
We evaluate the integral as follows
\begin{eqnarray}
&& \int_0^\infty\,\frac{1}{<t-s>^{d(p-2)/(3p)}}\,
\frac{1}{<s>^{d(p-2)(2\sigma+1)/(3p)}}ds\nonumber\\
&=&\int_0^t\,\frac{1}{<t-s>^{d(p-2)/(3p)}}\,
\frac{1}{<s>^{d(p-2)(2\sigma+1)/(3p)}}ds\nonumber\\
&+&\int_t^\infty\,\frac{1}{<t-s>^{d(p-2)/(3p)}}\,
\frac{1}{<s>^{d(p-2)(2\sigma+1)/(3p)}}ds\nonumber\\
&\equiv& I_1+I_2.\nonumber
\end{eqnarray}
Our assumption $p\ge 4d\sigma/(2d\sigma-3)$ implies $d(p-2)(2\sigma+1)/(3p)>1$. Therefore, for $I_1$ we estimate  
\begin{equation}
I_1=\int_0^t\,\frac{1}{<t-s>^{d(p-2)/(3p)}}\,
\frac{1}{<s>^{d(p-2)(2\sigma+1)/(3p)}}ds\le C\frac{1}{<t>^{d(p-2)/(3p)}}.\nonumber
\end{equation}
For $I_2$ we proceed as follows
\begin{eqnarray}
 I_2&=&\int_t^\infty\,\frac{1}{<t-s>^{d(p-2)/(3p)}}\,
\frac{1}{<s>^{d(p-2)(2\sigma+1)/(3p)}}ds\nonumber\\
&=&\int_t^\infty\,\frac{1}{(1+s-t)^{d(p-2)/(3p)}}\,
\frac{1}{(1+s)^{d(p-2)(2\sigma+1)/(3p)}}ds.\nonumber
\end{eqnarray}
The change of variables $s=t z$ gives
\begin{eqnarray}
 I_2
&=&\int_1^\infty\,\frac{t}{(1+t(z-1))^{d(p-2)/(3p)}}\,
\frac{1}{(1+tz)^{d(p-2)(2\sigma+1)/(3p)}}dz.\nonumber
\end{eqnarray}
A further change of variables $z=1/s$ results in
\begin{eqnarray}
 I_2
&=&\int_0^1\,\frac{t}{s^2}\frac{1}{(1+ts^{-1}(1-s))^{d(p-2)/(3p)}}\,
\frac{1}{(1+ts^{-1})^{d(p-2)(2\sigma+1)/(3p)}}ds\nonumber\\
&\le& \frac{1}{t^{2(\sigma+1)d(p-2)/(3p)-1}}\int_0^1\,\left(1-s\right)^{-d(p-2)/(3p)}\,s^{2(\sigma+1)d(p-2)/(3p)-2}ds\nonumber\\
&=& \frac{1}{t^{2(\sigma+1)d(p-2)/(3p)-1}}\,B\left(2(\sigma+1)d(p-2)/(3p)-1,1-d(p-2)/(3p)\right),\nonumber
\end{eqnarray}
where $B(x,y)=\int_0^1 s^{x-1}(1-s)^{y-1}ds$ defines the beta-function.
We obtain
\begin{eqnarray}
 \left\|\exp(i\Delta t) v^+-u(t) \right\|_{l^p}&\le& C\left(\frac{1}{t^{d(p-2)/(3p)}}\right.\nonumber\\
 &+&\frac{1}{t^{2(\sigma+1)d(p-2)/(3p)-1}}\nonumber\\
 &\times&\left.B\left(2(\sigma+1)d(p-2)/(3p)-1,1-d(p-2)/(3p)\right)\right),\,\,\,\,\forall t>0,\nonumber
\end{eqnarray}
so that we deduce $v^+ \in l^p$.

Furthermore, applying the operator $\exp(-i\Delta t)$
on both sides of the integral equation
\begin{equation}
 u(t)=\exp(i\Delta t)u_0-i\gamma \int_0^t\,\exp(i\Delta (t- s))  |u(s)|^{2\sigma}u(s)ds, \nonumber
\end{equation}
we obtain 
\begin{eqnarray}
 \exp(-i\Delta t) u(t)&=& u_0-i \gamma\int_0^t\,\exp(-i\Delta s))  |u(s)|^{2\sigma}u(s)ds\nonumber\\
 &=& v^+ +i\gamma\int_t^\infty\,\exp(-i\Delta s)  |u(s)|^{2\sigma}u(s)ds.\nonumber
\end{eqnarray}
Thus,
\begin{equation}
 \exp(-i\Delta t) u(t)-v^+=i \int_t^\infty\,\exp(-i\Delta s) \gamma |u(s)|^{2\sigma}u(s)ds,\nonumber
\end{equation}
and we get 
\begin{eqnarray}
 \left\|\exp(-i\Delta t)u(t) -v^+\right\|_{l^p}&=& \gamma\left\| \int_t^\infty\,\exp(-i\Delta s)  |u(s)|^{2\sigma}u(s)ds \right\|_{l^p}\nonumber\\
 &=& \gamma\left\| \int_t^\infty\,\exp(-i\Delta t)\exp(i\Delta (t-s))  |u(s)|^{2\sigma}u(s)ds \right\|_{l^p}\nonumber\\
 &\le& \gamma  \int_0^\infty\,\left\|\exp(-i\Delta t)\exp(i\Delta (t-s)) |u(s)|^{2\sigma}u(s)\right\|_{l^p}ds \nonumber\\
&\le&  C \gamma 
 \int_0^\infty\,\frac{\left\| u(s)\right\|_{l^{(2\sigma +1)p^\prime}}^{2\sigma +1}}{<t-s>^{d(p-2)/(3p)}} ds\,\left\| \exp(-i\Delta t)\right\|_{\mathcal{L}(l^p,l^{p^\prime})}\nonumber\\
 &\le&  C_1 \gamma \,\frac{\left\| u(0)\right\|_{l^{p^\prime}}^{2\sigma +1}}{<t>^{d(p-2)/(3p)}}
 \int_0^\infty\,\frac{1}{<t-s>^{d(p-2)/(3p)}}\,\frac{1}{<s>^{d(p-2)(2\sigma+1)/(3p)}}ds\nonumber\\
 &\le&  C_2 \gamma \,\left\| u(0)\right\|_{l^{p^\prime}}^{2\sigma +1} \,\frac{1}{<t>^{d(p-2)/(3p)}}\left(\frac{1}{<t>^{d(p-2)/(3p)}}\right.\nonumber\\
 &+&\left.\frac{1}{t^{2(\sigma+1)d(p-2)/(3p)-1}}\,B\left(2(\sigma+1)d(p-2)/(3p)-1,1-d(p-2)/(3p) \right)\right), \nonumber
\end{eqnarray}
for all $t>0$. Since by assumption $2(\sigma+1)(p-2)/(3p)-1>d(p-2)/(3p)>0$  it follows
\setcounter{equation}{4}
\begin{equation}
\left\|\exp(-i\Delta t)u(t) -v^+\right\|_{l^p}
\le
C||u_0||_{l^{p^\prime}}\frac{1}{t^{d(p-2)/(3p)}},\,\,\,t>0.\nonumber
\end{equation}

 Scattering in the negative time direction can be dealt with in the same way as above in  
 the positive time direction. In fact, due to the time reversibility of system (\ref{eq:system1}) it holds $v^-=\overline{v}^+$.   
Hence, we obtain
\begin{equation}
 \left\|u(t)-\exp(i\Delta t)v^{\pm}  \right\|_{l^p}\longrightarrow 0\,\,\,{\rm as}\,\,\,
 t\rightarrow  \pm \infty.\nonumber
\end{equation}
and the proof is complete.

\  \  $\Box$

\vspace*{0.5cm}

With the  asymptotic behavior of the solutions we  prove also
 Weinstein's conjecture  for the general power DNLS (\ref{eq:system2}) with $\sigma \ge 2/d$ \cite{Weinstein}: If $\nu(u_0)=||u_0||_{l^2}^2<\nu_{exc}$, then for any $p\in (2,\infty]$ the solutions decay, that is,
 \begin{equation}
  \left\| u(t)\right\|_{l^p}\rightarrow 0,\qquad{\rm as}\,\,\,|t|\rightarrow \infty.\nonumber
 \end{equation}
 
In fact, for the solution
\begin{equation}
 u(t)=\exp(i\Delta t)u_0-i \gamma\int_0^t\,\exp(i\Delta (t- s))|u(s)|^{2\sigma}u(s)ds, \nonumber
\end{equation}
we derive in an analogous manner to above,
\begin{eqnarray}
 ||u(t)||_{l^p}&\le& C ||u_0||_{l^{p^\prime}}\frac{1}{t^{d(p-2)/(3p)}}+\gamma \left\|\int_0^t\,\exp(i\Delta (t- s)) |u(s)|^{2\sigma}u(s)ds\right\|_{l^p}\nonumber\\
 &\le& C_1||u_0||_{l^{p^\prime}}\frac{1}{t^{d(p-2)/(3p)}}.\nonumber
\end{eqnarray}
Hence, 
\begin{equation}
  \left\| u(t)\right\|_{l^p}\rightarrow 0,\qquad{\rm as}\,\,\,t\rightarrow \infty.
  \nonumber
 \end{equation}
Decay for $t\rightarrow -\infty$ is verified analogously.
 
 In \cite{Stefanov} the authors presented a proof that for {\it sufficiently small solutions,
 their decay is like that of the  free solutions in the corresponding $l^p$ norms
 and this statement implies Weinstein's conjecture}. However, 
 our current result in Theorem \ref{theorem:scatterin1} is valid for 
 solutions with any $l^2$ norm below the threshold $\nu_{exc}$,
 as it is actually stated in Weinstein's conjecture, and 
 thus not restricted to 'sufficiently small solutions' as in \cite{Stefanov}. 
 Take, for example, $d=1$. Then $\nu_{tresh}=(2\sigma+2)/\gamma)^{1/\sigma}$. As noted  in the Introduction, the system  (\ref{eq:system1}) can be rendered independent of the  nonlinearity parameter $\gamma$
 and hence, we can set it equal to one
 in $\nu_{exc}$ giving $\nu_{exc}=(2(\sigma+1))^{1/\sigma}$
 which is not necessarily of small magnitude.

\vspace*{0.5cm}

Remains the treatment of the case $V_0\neq 0$. The operator $-\Delta_\delta=-\Delta-V_\delta$ possesses an absolutely continuous spectrum
$[0,4d]$. If $V_0>0$ ($V_0<0$) there is a
single eigenvalue (eigenenergy) (cf. (\ref{eq:eigenvalue0})) $\lambda<0$ ($\lambda>4d$) whose
eigenfunction represents the single linear non-staggering (staggering) bound state.

With concern to the absolutely continuous spectrum of $\Delta_\delta$, governing the scattering features, we have the following:
\begin{proposition}
The absolutely continuous spectrum of $\Delta_\sigma$ coincides with those of the unperturbed discrete Laplacian $\Delta$, i.e.
\begin{equation}
 \sigma_{cont}(\Delta_\delta)=\sigma_{cont}(\Delta).\nonumber
\end{equation}
\end{proposition}

\begin{proof}
Consider the multiplication operator $V_\delta\,:\,l^p \mapsto l^p$,
  $V=(V_n)_{n\in \mathbb{Z}}$, $\sup_{n\in \mathbb{Z}} |V_n|<\infty$, $V_\delta x=(...,V_{-1} x_{-1}, V_0 x_0, V_1 x_1,...)$ for all $x=(...,x_{-1},x_0,x_1,...)\in l^p$, $1\le p \le \infty$.
 Since  $V_n\neq 0$ if and only if $n = 0$, the operator $V_\delta$ is a rank-one operator and hence, compact.

Then  by Weyl's Theorem, the essential spectrum of $\Delta$ coincides with the essential spectrum of $\Delta_\delta$. (Since $\Delta$ is a bounded and self-adjoint operator its residual spectrum is empty.) From $\sigma_{ess}(\Delta_\delta)= \sigma_{ess}(\Delta)$ and $\sigma_{cont}(\Delta)\subset \sigma_{ess}(\Delta)$  follows $\sigma_{cont}(\Delta_\delta)$ equals $\sigma_{cont}(\Delta)$
finishing the proof.
 \end{proof}

\vspace*{0.5cm}

Regarding the proof of Theorem \ref{theorem:scatterin2} we facilitate the dispersive estimates for discrete nonlinear Schr\"odinger equations with potentials
\begin{equation}
 i\dot{\phi}_n(t)=H\phi_n(t)=\left(-\Delta+V_n\right)\phi_n(t).\nonumber
\end{equation}
In particular, we utilize Theorem 4 in \cite{Stefanov2} stating that for fixed $\sigma>5/2$ and a generic potential $V\in l^1_{2\sigma-1}$ there exists a constant $C$ depending on $V$ so that
\begin{equation}
 \left\| \exp(itH)P_{ac}(H)\right\|_{l^1\rightarrow l^\infty}\le Ct^{-3/2},\label{eq:l1linfty}
\end{equation}
for any $t>0$.

The potential $V_n=V_0\delta_{n,0}$ is generic in the sense of Definition 1 in \cite{Stefanov2}.  Interpolation between
$\left\|\psi(t)\right\|_{l^\infty}\le Ct^{-3/2}\left\| \psi(0)\right\|_{l^1}$
and $\left\| \psi(t)\right\|_{l^2}=\left\| \psi(0)\right\|_{l^2}$ yields
\begin{equation}
 \left\| \exp(itH)P_{ac}(H)\psi(0)\right\|_{l^p}\le  Ct^{-3/2} \left\| \psi(0)\right\|_{l^{p^\prime}},\,\,\,\frac{1}{p}+\frac{1}{p^\prime}=1,
\end{equation}
from which follows
that for any $p\ge 2$, $ \left\| \exp(itH)P_{ac}(H)\psi(0)\right\|_{l^p}\le  t^{-(p-2)/(3p)}$.

Since the rate of decay for the free solutions and the one with a $\delta-$potential coincide, the proof of Theorem \ref{theorem:scatterin2} proceeds along the lines of the proof of Theorem \ref{theorem:scatterin1} and, thus, is omitted.

\vspace*{0.5cm}

 \begin{remark}
 While for $V_0<0$, $\sigma< \min\{1,2/d\}$ respectively $V_0=0$, $\sigma < 2/d$,  the solutions do not scatter,
 for $V_0<0$, $\sigma\ge \max\{1,2/d\}$ respectively $V_0=0$, $\sigma \ge 2/d$,  we have the {\bf dichotomy} result:

 \noindent (1) If $||u_0||_{l^2}<\nu_{exc}$, then the solutions scatter in both time directions.

 \noindent (2) If $||u_0||_{l^2}\ge \nu_{exc}$, the ground state is a global  non-scattering  solution.

 \end{remark}

\noindent{\bf Acknowledgment}

\noindent The author is grateful to a referee for careful reading and invaluable suggestions.

 \vspace*{0.5cm}


\end{document}